\documentclass[11pt]{article}
\usepackage{times,fullpage}
\usepackage{algorithmic,algorithm,psfrag,graphicx}
\usepackage{bm}
\usepackage{color}

\newcommand{\trsp}{^T}
\newcommand{\tendsto}{\rightarrow}
\newcommand{\opnorm}[1]{\left\|#1\right\|_2}
\newcommand{\Frobnorm}[1]{\left\|#1\right\|_F}
\newcommand{\norm}[1]{\left\|#1\right\|}
\newcommand{\id}{\textrm{Id}}
\newcommand{\eps}{\varepsilon}

\newcommand{\lo}[1]{\textrm{o}\left(#1\right)}
\newcommand{\vTildeEps}{\tilde{v}_{\eps}}
\usepackage{amsmath,amsthm,cite,amssymb}

\newtheorem{theorem}{Theorem}
\newtheorem{corollary}[theorem]{Corollary}
\title{Second order accurate distributed eigenvector computation for extremely large matrices}
\date{First version: May 2009\\ This version: Feb 2010}
\author{Noureddine El Karoui\thanks{Department of Statistics, U.C. Berkeley, Berkeley, CA 94720. \texttt{nkaroui@stat.berkeley.edu}. Support from an Alfred P. Sloan research Fellowship and NSF grants DMS-0605169 and DMS-0847647 (CAREER) is gratefully acknowledged.}
\and
Alexandre d'Aspremont\thanks{ORFE, Princeton University, Princeton, NJ 08544. \texttt{aspremon@princeton.edu}. Support from NSF grants DMS-0625352, SES-0835550 (CDI), CMMI-0844795 (CAREER), a Peek junior faculty fellowship, a Howard B. Wentz Jr. award and a gift from Google is gratefully acknowledged.}
}

\newtheorem{assumption}{Assumption}

\newcommand{\BEAS}{\begin{eqnarray*}}
\newcommand{\EEAS}{\end{eqnarray*}}
\newcommand{\BEA}{\begin{eqnarray}}
\newcommand{\EEA}{\end{eqnarray}}
\newcommand{\BEQ}{\begin{equation}}
\newcommand{\EEQ}{\end{equation}}
\newcommand{\BIT}{\begin{itemize}}
\newcommand{\EIT}{\end{itemize}}
\newcommand{\BNUM}{\begin{enumerate}}
\newcommand{\ENUM}{\end{enumerate}}
\newcommand{\BMI}{\begin{minipage}}
\newcommand{\EMI}{\end{minipage}}

\newcommand{\BA}{\begin{array}}
\newcommand{\EA}{\end{array}}
\newcommand{\BC}{\begin{center}}
\newcommand{\EC}{\end{center}}


\newcommand{\ones}{\mathbf 1}

\newcommand{\reals}{{\mbox{\bf R}}}

\newcommand{\symm}{{\mbox{\bf S}}}  


\newcommand{\Rank}{\mathop{\bf Rank}}

\newcommand{\Card}{\mathop{\bf Card}}

\newcommand{\NumRank}{\mathop{\bf NumRank}}

\newcommand{\Expect}{\mathop{\bf E{}}}


\newcommand{\var}{\mathop{\bf var}}




\setcounter{totalnumber}{50}
\setcounter{topnumber}{50}
\setcounter{bottomnumber}{50}

\begin{document}
\maketitle
\begin{abstract}
We propose a second-order accurate method to estimate the eigenvectors of extremely large matrices thereby addressing a problem of relevance to statisticians working in the analysis of very large datasets. More specifically, we show that averaging eigenvectors of randomly subsampled matrices efficiently approximates the true eigenvectors of the original matrix under certain conditions on the incoherence of the spectral decomposition. This incoherence assumption is typically milder than those made in matrix completion and allows eigenvectors to be sparse. We discuss applications to spectral methods in dimensionality reduction and information retrieval.
\end{abstract}

\section{Introduction}
Spectral methods have a long list of applications in statistics and machine learning. Beyond dimensionality reduction techniques such as PCA or CCA \cite{anderson03,mardiakentbibby}, they have been used in clustering \cite{Ng02}, ranking \& information retrieval \cite{Page98, Hast01,Lang05} or classification for example. Computationally, one of the most attractive features of these methods is their low numerical cost, in particular on problems where the data matrix is sparse (e.g. graph clustering or information retrieval). Computing a few leading eigenvalues and eigenvectors of a matrix, using the power or Lanczos methods for example, requires performing a sequence of matrix vector products and can be processed very efficiently. This means that when the matrix is dense and has dimension $n$, the cost of each iteration is $O(n^2)$ in both storage and flops.

However, for extremely large scale problems arising in statistics or information retrieval for example, this cost quickly becomes prohibitively high and makes spectral methods impractical. In this paper, we propose a randomized, distributed algorithm to estimate eigenvectors (and eigenvalues) which makes spectral methods tractable on very large scale matrices. We show that our method is second order accurate and illustrate its performance on a few realistic datasets.

Going back to the numerical cost of spectral methods, we see that decomposing each matrix vector product in many smaller block operations partially alleviates the complexity problem, but makes the overall process very bandwidth intensive. Decomposition techniques thus improve the {\em granularity} of iterative eigenvalue methods (i.e. require many cheaper operations instead of a single very expensive one), but at the expense of significantly higher bandwidth requirements. Here, we focus on methods that improve the granularity of large-scale eigenvalue computations while having {\em very low bandwidth} requirements, meaning that they can be fully distributed over many loosely connected machines.

The idea of using subsampling to lower the complexity of spectral methods can be traced back at least to \cite{Groh91,papa00} who described algorithms based on subsampling and random projections respectively. Explicit error estimates followed in \cite{Frie04,Drin06,Achl07} which bounded the approximation error of either elementwise or columnwise matrix subsampling procedures. On the application side, a lot of work has been focused on the Pagerank vector, and \cite{Ng01} in particular study its stability under perturbations of the network matrix. Similar techniques are applied to spectral clustering in \cite{Huan08} and both works have close connections to ours. Following the {\em Netflix} competition on collaborative filtering, a more recent stream of works \cite{Rech07,Cand08,Cand09,Kesh09} has also been focused on {\em exactly} reconstructing a low rank matrix from a small, single incoherent set of observations. Finally, more recent ``volume sampling'' results provide relative error bounds \cite{Vemp09}, but so far, the sampling probabilities required to obtain these improved error bounds remain combinatorially hard to compute.

Our work here is focused on the impact of subsampling on eigenvector approximations. First we seek to understand how far we can reduce the granularity of eigenvalue methods using subsampling, before reconstructing eigenvectors becomes impossible. This question was partially answered in \cite{Cand09,Kesh09} for matrices with low rank, incoherent spectrum, using a {\em single} subset of matrix coefficients, after solving a convex program with {\em high complexity}. Here we make much milder assumptions on matrix incoherence. In particular, we allow some eigenvectors to be {\em sparse} (while remaining incoherent on their support) and we approximate eigenvectors using {\em many} simple operations on subsampled matrices. Under certain conditions on the sampling rate which guarantee that we remain in a perturbative setting, we show that simply {\em averaging} many approximate eigenvectors obtained by subsampling reduces approximation error by an order of magnitude. 

\paragraph{Notation.} In what follows, we write $\symm_n$ the set of symmetric matrices of dimension $n$. For a matrix $X\in\reals^{m\times n}$, we write $\|X\|_F$ its Frobenius norm, $\|X\|_2$ its spectral norm, $\sigma_i(X)$ its $i$-th largest singular value and let $\|X\|_\infty=\max_{ij}|X_{ij}|$, while $\Card(X)$ is the number of nonzero coefficients in $X$. We denote by $X(i,j)$ or $X_{ij}$ its $(i,j)$-th element and by $M_i$ the $i$-th column of $M$. Here, $\circ$ denotes the Hadamard (i.e entrywise) product of matrices. When $x\in\reals^n$ is a vector, we write its Euclidean norm $\|x\|_2$ and $\|x\|_\infty$ its $\ell_\infty$ norm. We write $\ones \in\reals^n$ the vector having all entries equal to 1. Finally, $\kappa$ denotes a generic constant, whose value may change from display to display. 

\section{Subsampling}
\label{s:subsamp} We first recall the subsampling procedure in \cite{Achl07} which approximates a symmetric matrix $M\in\symm_n$ using a subset of its coefficients. The entries of $M$ are independently sampled as
\BEQ\label{eq:sampl}
{S}_{ij}=
\left\{\BA{cl}
M_{ij}/p & \mbox{with probability $p$}\\
0 & \mbox{otherwise,}
\EA\right.
\EEQ
where $p\in[0,1]$ is the sampling probability. Theorem 1.4 in \cite{Achl07} shows that when $n$ is large enough
\BEQ\label{eq:bound-am07}
\|M-S\|_2 \leq 4 \|M\|_\infty \sqrt{{n}/{p}},
\EEQ
holds with high probability. In what follows, we will prove a similar bound on $\|M-S\|_2$ using incoherence conditions on the spectral decomposition of $M$.

\subsection{Computational benefits}
\label{ss:compute}
Computing $k$ leading eigenvectors and eigenvalues of a symmetric matrix of dimension $n$ using iterative algorithms such as the power or Lanczos methods (see \cite[Chap. 8-9]{Golu90} for example) only requires matrix vector products, hence can be performed in $O(kn^2)$ flops when the matrix is dense. However, this cost is reduced to $O(k\Card(M))$ flops for sparse matrices $M$. Because the matrix~$S$ defined in~(\ref{eq:sampl}) has only $pn^2$ nonzero coefficients on average, the cost of computing $k$ leading eigenvalues/eigenvectors of $S$ will typically be $1/p$ times smaller than that of performing the same task on the full matrix $M$. Of course, sampling the matrix $S$ still requires $O(n^2)$ flops, but can be done in a single pass over the data and be fully distributed. In what follows, we will show that, under incoherence conditions, averaging the eigenvectors of many independently subsampled matrices produces second order accurate approximations of the original spectral decomposition. While the global computational cost of this averaging procedure may not be globally lower, it is decomposed into many much smaller computations, and is thus particularly well adapted to large clusters of simple, loosely connected machines (Amazon EC2, Hadoop, etc.).

\begin{figure}[h]
\begin{center}
\includegraphics[width=0.8 \textwidth]{./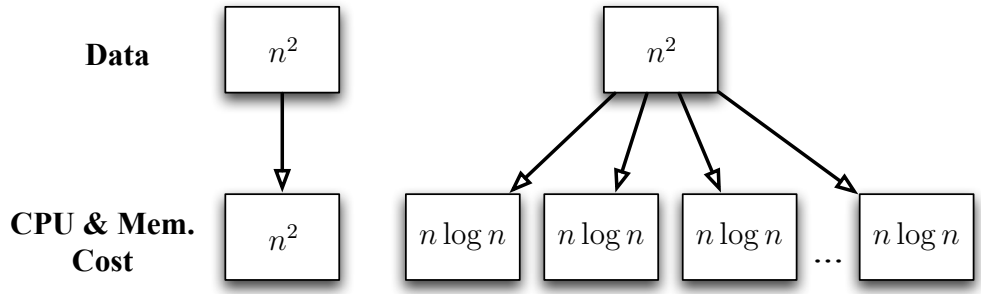}
\caption{Our objective here is to approximate the spectral decomposition problem of size $O(n^2)$ by solving many independent problems of much smaller size.}
\end{center}
\end{figure}

\subsection{Sparse matrix approximations}
\label{ss:sparse-approx}
Let us write the spectral decomposition of $M\in\symm_n$ as
\[
M=\sum_{i=1}^n \lambda_i u_iu_i^T
\]
where $u_i\in\reals^n$ for $i=1,\ldots,n$ and $\lambda\in\reals^n$ are the eigenvalues of $M$ with $\lambda_1> \ldots > \lambda_n$ (we assume they are all distinct). Let $\alpha\in[0,1]^n$, we measure the {\em incoherence} of the matrix $M$ as
\BEQ\label{eq:incoherence}
\mu(M,\alpha)=\sum_{i=1}^n |\lambda_i| n^{\alpha_i}\|u_i\|_\infty^2
\EEQ
Note that this definition is slightly different from that used in \cite{Cand09} because we do not seek to reconstruct the matrix $M$ exactly, so the tail of the spectrum can be partially neglected in our case. As we will see  below, the fact that we only seek an approximation also allows us to handle sparse eigenvectors.

Let us define a matrix $Q\in\symm_n$ with i.i.d. Bernoulli coefficients
\[
{Q}_{ij}=
\left\{\BA{cl}
1/p & \mbox{with probability $p$}\\
0 & \mbox{otherwise.}
\EA\right.
\]
We can write
\[
Q=\ones\ones^T+\sqrt{\frac{1-p}{p}} C
\]
where $C$ is has i.i.d. entries with mean zero and variance one, defined as
\[
{C}_{ij}=
\left\{\BA{cl}
\sqrt{{(1-p)}/{p}} & \mbox{with probability $p$}\\
-\sqrt{{p}/{(1-p)}} & \mbox{otherwise.}
\EA\right.
\]
We can now write the sampled matrix $S$ in (\ref{eq:sampl}) as
\BEQ\label{eq:sampling-decomp}
S=M\circ Q=M+ \sqrt{\frac{1-p}{p}} \left( \sum_{i=1}^n \lambda_i (u_iu_i^T) \circ C \right) \equiv M+E
\EEQ
and we now seek to bound the spectral norm of the residual matrix $E$ as $n$ goes to infinity. Naturally, if $\opnorm{E}$ is small, $S$ is a good approximation of $M$ in spectral terms, because of Weyl's inequality and the Davis-Kahan $\sin(\theta)$-theorem (see \cite{bhatia97}). So our aim now is to control $\opnorm{E}$ so we can guarantee the quality of spectral approximations of $M$ made using the sparse matrix $S$ which is computationally easier to work with than the dense matrix $M$. We now make the following key assumptions on the incoherence of the matrix $M$.

\begin{assumption}\label{Assump:BoundedIncoherence}
There is a sequence of vectors $\alpha^{(n)}\in[0,1]^n$ for which
\[
\mu(M,\alpha^{(n)})\leq \mu \quad \mbox{and} \quad \Card(u_i)\leq n^{\alpha^{(n)}_i}, \quad i=1,\ldots,n
\]
as $n$ goes to infinity, where $\mu$ is an absolute constant.
\end{assumption}
In what follows, we will drop the dependence of $\alpha$ on $n$ to make the notation less cumbersome, so instead of writing $\alpha^{(n)}$ we will just write $\alpha$. We have the following theorem.
\begin{theorem}\label{Thm:ControlOpNormErrorMatrix}
Suppose that Assumption \ref{Assump:BoundedIncoherence} holds. Let us call $\alpha_{min}=\min_{1\leq i \leq n}\alpha_i$. Assume that $p$ and $n$ are such that, $p<1/2$, and for a given $\delta>0$, $\alpha_{\min}>(\log n)^{(\delta-3)/4}$ and
$$
\frac{(\alpha_{\min}\log n)^{4}}{pn^{\alpha_{\min}}}\tendsto 0\;, \text{ as } n\tendsto \infty,
$$
then we have
\BEQ\label{eq:norm-bound}
\limsup_{n\tendsto \infty}\|E\|_2 \leq 2 \mu \left(pn^{\alpha_{\mathrm{min}}}\right)^{-1/2}\text{ a.s }\;.
\EEQ
\end{theorem}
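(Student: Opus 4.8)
The plan is to bound $\opnorm{E}$ by the triangle inequality applied to the spectral decomposition and then to control each term using the sparsity of the eigenvectors together with a concentration bound for the operator norm of a random symmetric matrix. First I would rewrite each Hadamard product as a diagonal conjugation: setting $D_i=\mathrm{diag}(u_i)$, one has $(u_iu_i\trsp)\circ C = D_i C D_i$, so that
\[
E=\sqrt{\tfrac{1-p}{p}}\sum_{i=1}^n \lambda_i\, D_i C D_i,
\qquad\text{whence}\qquad
\opnorm{E}\le \sqrt{\tfrac{1-p}{p}}\sum_{i=1}^n|\lambda_i|\,\opnorm{D_i C D_i}.
\]
Since $u_i$ is supported on a set $T_i$ with $\Card(u_i)\le n^{\alpha_i}$, the matrix $D_iCD_i$ only involves the principal submatrix $C_{T_i}$ of $C$, and $\opnorm{D_iCD_i}\le \|u_i\|_\infty^2\,\opnorm{C_{T_i}}$.

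The key probabilistic step is to show that, almost surely and uniformly in $i$ for $n$ large, $\opnorm{C_{T_i}}\le 2\sqrt{n^{\alpha_i}}\,(1+\lo{1})$. The matrix $C$ is symmetric with i.i.d.\ (above the diagonal) entries of mean zero and variance one that are bounded in absolute value by $\sqrt{(1-p)/p}\le p^{-1/2}$ (using $p<1/2$), so each $C_{T_i}$ is a bounded-entry Wigner matrix of dimension at most $n^{\alpha_i}$. I would invoke a concentration result for the largest eigenvalue of such matrices, in the spirit of F\"uredi--Koml\'os and Vu, which gives $\opnorm{C_{T_i}}\le 2\sqrt{n^{\alpha_i}}+\kappa\, p^{-1/4}n^{\alpha_i/4}\alpha_i\log n$ with high probability. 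The stated hypotheses are exactly what make the correction negligible: dividing it by the main term $2\sqrt{n^{\alpha_i}}$ and raising to the fourth power produces $(\alpha_i\log n)^4/(pn^{\alpha_i})$, which by $\alpha_i\ge\alpha_{\min}$ is dominated by $(\alpha_{\min}\log n)^4/(pn^{\alpha_{\min}})\tendsto 0$. A union bound over the $n$ supports $T_i$ together with the Borel--Cantelli lemma upgrades this to an almost sure bound holding simultaneously for all $i$, and the lower bound $\alpha_{\min}>(\log n)^{(\delta-3)/4}$ is what guarantees that the resulting tail probabilities remain summable.

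Finally I would assemble the pieces. Combining the above with $\sqrt{(1-p)/p}\le p^{-1/2}$ gives
\[
\opnorm{E}\le 2\,(1+\lo{1})\,p^{-1/2}\sum_{i=1}^n|\lambda_i|\,\|u_i\|_\infty^2\, n^{\alpha_i/2}.
\]
Writing $n^{\alpha_i/2}=n^{\alpha_i}n^{-\alpha_i/2}\le n^{\alpha_i}n^{-\alpha_{\min}/2}$ and recognizing the incoherence $\sum_i|\lambda_i|n^{\alpha_i}\|u_i\|_\infty^2=\mu(M,\alpha)\le\mu$ from Assumption \ref{Assump:BoundedIncoherence}, the sum collapses to $\mu\, n^{-\alpha_{\min}/2}(1+\lo{1})$, so that $\opnorm{E}\le 2\mu\,(pn^{\alpha_{\min}})^{-1/2}(1+\lo{1})$ almost surely; taking $\limsup_{n\tendsto\infty}$ yields \eqref{eq:norm-bound}.

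The main obstacle is the uniform almost sure operator-norm bound on the submatrices $C_{T_i}$: the entries of $C$ are bounded only by $p^{-1/2}$, which diverges as $p\to 0$, so a naive semicircle estimate does not suffice and one must track the bounded-entry correction term carefully, verifying that the hypotheses on $p$, $n$ and $\alpha_{\min}$ simultaneously force it below the $2\sqrt{n^{\alpha_i}}$ main term and leave summable tail probabilities for the union bound.
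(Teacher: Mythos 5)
Your decomposition and final assembly coincide with the paper's own proof: writing $(u_iu_i\trsp)\circ C=D_iCD_i$, using sparsity to reduce each term to a principal submatrix $C_{T_i}$ of dimension $n^{\alpha_i}$, and collapsing the sum via $\sum_i|\lambda_i|n^{\alpha_i}\|u_i\|_\infty^2\le\mu$ to get $2\mu(pn^{\alpha_{\min}})^{-1/2}$. The gap is in your key probabilistic step, which you assert rather than prove. You invoke a single F\"uredi--Koml\'os/Vu-type statement, $\opnorm{C_{T_i}}\le 2\sqrt{n^{\alpha_i}}+\kappa p^{-1/4}n^{\alpha_i/4}\alpha_i\log n$ ``with high probability'', and then ask a union bound over the $n$ supports plus Borel--Cantelli to make it uniform and almost sure. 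But Vu's Theorem 1.4 is an almost-sure asymptotic statement in the dimension of the matrix at hand, and the tails its trace-method proof yields at the relevant deviation scale $t=\lo{1}$ are of order $\exp(-c\,k\,t)$, where the moment index $k$ cannot exceed roughly $(p\,n^{\alpha_i})^{1/2}$ when the entries are bounded by $p^{-1/2}$. Under the hypotheses of the theorem, $n^{\alpha_{\min}}$ can be sub-polynomial in $n$ (if $\alpha_{\min}\approx(\log n)^{(\delta-3)/4}$ then $n^{\alpha_{\min}}=\exp((\log n)^{(1+\delta)/4})$) and $pn^{\alpha_{\min}}$ can be barely above $(\log n)^{1+\delta}$; the best trace-method tail is then roughly $\exp(-c(\log n)^{(1+\delta)/2})$, and for $\delta<1$ one has $n\exp(-c(\log n)^{(1+\delta)/2})\tendsto\infty$, so the union bound over the $n$ supports already fails --- let alone summability in $n$ for Borel--Cantelli. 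Thus your closing claim that the hypotheses ``leave summable tail probabilities for the union bound'' is exactly the assertion that the cited results do not deliver, and it is the crux of the theorem.

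The missing idea --- and the way the paper's Theorem \ref{thm:ControlMaxOpNormsMatrices} closes precisely this hole --- is to decouple the fluctuations of $\opnorm{C_{T_i}}$ from its location. Talagrand's inequality for convex Lipschitz functions of bounded variables gives, for each submatrix, $P\left(\left|\opnorm{C_{T_i}/n^{\alpha_i/2}}-m_{n^{\alpha_i}}\right|>t\right)\le 4\exp\left(-t^2p(1-p)n^{\alpha_i}/8\right)$, an exponent that scales with $p\,n^{\alpha_i}$ rather than with powers of $\log n^{\alpha_i}$; choosing $t=(\log n)^{-\delta/3}$ and using $pn^{\alpha_{\min}}>(\log n)^{1+\delta}$ (this is where $\alpha_{\min}>(\log n)^{(\delta-3)/4}$ enters) makes $n$ times this tail summable, so Borel--Cantelli applies and the maximum over $i$ of the deviations is $\lo{1}$ almost surely. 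Vu's theorem is then needed only to show that the medians $m_{n^{\alpha_i}}$ --- a deterministic sequence --- satisfy $\limsup_{n\tendsto\infty}\max_i m_{n^{\alpha_i}}\le 2$, for which no tail estimate whatsoever is required. If you replace your single high-probability bound by this two-step argument (Talagrand for the fluctuations, Vu for the medians), the rest of your proof goes through verbatim.
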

\begin{proof}
Using \cite[Th.~5.5.19]{Horn91} or the fact that $uu^T\circ C =D_u C D_u$, where $D_u$ is a diagonal matrix with the vector $u$ on the diagonal (remember that $\opnorm{\cdot}$ is a matrix norm and hence sub-multiplicative), we get
\BEQ \label{eq:hadamard-ineq}
\|E\|_2 =  \sqrt{\frac{1-p}{p}} \left\| \sum_{i=1}^n \lambda_i C\circ  (u_iu_i^T)  \right\|_2 \leq \sqrt{\frac{1-p}{p}}\sum_{i=1}^n |\lambda_i| n^{{\alpha_i}/{2}}  \|u_i\|_\infty^2 \left\| \frac{C_{\alpha_i}}{n^{\alpha_{i/2}}} \right\|_2\;.
\EEQ
Since we assume that the vector $u_i$ is sparse with $\Card(u_i)\leq n^{\alpha_i}$, $C_{\alpha_i}$ is a principal submatrix of $C$ with dimension $n^{\alpha_i}$. Now,
we show in Theorem \ref{thm:ControlMaxOpNormsMatrices} (this is the key element of the proof - see p.\pageref{thm:ControlMaxOpNormsMatrices}) that
\[
\limsup_{n\tendsto \infty} \left\| \frac{C_{\alpha_i}}{n^{\alpha_{i/2}}} \right\|_2 \leq 2\;,
\]
whenever $p=o\left(\frac{(\alpha_{\min}\log n)^{4}}{n^{\alpha_{\min}}}\right)$, and $\alpha_{\min}>(\log n)^{(\delta-3)/4}$ for some $\delta>0$. (Our proof of Theorem \ref{thm:ControlMaxOpNormsMatrices} relies on a result of Vu \cite{Vu07a} and Talagrand's inequality.). This yields Equation \eqref{eq:norm-bound} and concludes the proof.
\end{proof}

The proof of the theorem makes clear that the error term coming from the sparsest eigenvector will usually dominate all the others in the residual matrix $E$.

In these approximation methods, we naturally want to use a small $p$, so that $S$ is very sparse and the computation of its spectral decomposition is numerically cheap. The result of Theorem \ref{thm:ControlOfE} guarantees that the subsampling approximation works whenever $p\gg (\alpha_{\min}\log n)^4/n^{\alpha_{\min}}$ (asymptotically, but we have in mind a very high-dimensional setting, so $n$ will be large in practice).

A natural question is therefore whether we could use $p$ much smaller than this. Separate computations (see Subsection \ref{AppSubsec:TightnessResult}) indicate that $\|C/n^{1/2}\|_2$ goes to infinity if $p\leq (\log\, n)^{1-\delta}/n$, which suggests that this subsampling approach to approximating eigenproperties of $M$ might run into trouble if the sampling rate $p$ gets smaller than $\log n/n$. As a matter of fact, we could not control the quantities $\opnorm{C_{\alpha_i}/n^{\alpha_i/2}}$ at this sampling rate, which is naturally problematic given the way we established the bound on $\opnorm{E}$. Furthermore, if the sparsest eigenvector had support disjoint from the supports of all other eigenvectors, $E$ would be the sum of two block diagonal matrices. Hence, its operator norm would be the maximum of the operator norms of the two blocks, at least one of which having potentially very large operator norm.

\subsection{Tightness}
\label{ss:tightness}


Note that, in the limit case $\alpha=\ones$ where the eigenvectors are fully dense and incoherent, our bound is similar to the original bound in \cite[Theorem 1.4]{Achl07} or that of \cite[Th~1.1]{Kesh09} (our model for $M$ is completely different however). In fact, the bounds in (\ref{eq:bound-am07}) and (\ref{eq:norm-bound}) can be directly compared. In the fully dense case where $\alpha=\ones$, we have
\[
\sqrt{n}\|M\|_\infty = \sqrt{n} \left\|\sum_{i=1}^n \lambda_i u_iu_i^T \right\|_\infty
\leq n^{-1/2} \sum_{i=1}^n |\lambda_i| n \|u_i\|_\infty^2
\leq n^{-1/2} \mu,
\]
so in this limit case, the original bound in (\ref{eq:bound-am07}) is always tighter than our bound in (\ref{eq:norm-bound}). However, in the sparse incoherent case where $\alpha\neq\ones$, the ratio of the bound (\ref{eq:bound-am07}) in \cite{Achl07} over our bound (\ref{eq:norm-bound}) becomes
\[
\frac{2\left\|\sum_{i=1}^n \lambda_i n^{\frac{(\alpha_{\mathrm{min}}+1)}{2}} u_iu_i^T \right\|_\infty}{\sum_{i=1}^n |\lambda_i| n^{\alpha_i}\|u_i\|_\infty^2},
\]
which can be large when $\alpha_{\mathrm{min}}<1$. The results in \cite{Kesh09}, which are focused on exact recovery of low rank incoherent matrices, do not apply when the eigenvectors are sparse (i.e. $\alpha\neq\ones$).

\subsection{Approximating eigenvectors}
\label{ss:approx-eigv} We now study the impact of subsampling on the eigenvectors and in particular on the one associated with the largest eigenvalue. We have the following theorem.
\begin{theorem}\label{thm:ApproxEigenvectorsOrderk+2Accurate}
Assume that the eigenvalues of $M$ are simple. Let us call $v_k\in\reals^n$ and $\lambda_{k}(S)$  the $k$-th eigenpair of $S$, and $u_k\in\reals^n$, $\lambda_k$ the $k$-th eigenpair of $M$. We write $R_k$ the reduced resolvent of $M$ associated with $u_k$, defined as
\[
R_k=\sum_{j\neq k} \frac{1}{\lambda_j-\lambda_k}u_ju_j^T,
\]
and let $\Delta_k=R_k(E-(\lambda_{k}(S)-\lambda_k)\id)$. We also call $d_k$ the separation distance of $\lambda_k$, i.e the distance from~$\lambda_k$ to the nearest eigenvalue of~$M$. If $\opnorm{E}$ satisfies $\opnorm{E}<d_k/2$, then
\begin{equation}\label{eq:ApproxNormErrorVector}
\norm{v_k-u_k+\left[\sum_{m=0}^j (-1)^m \Delta^m \right]R_kE u_k}_2\leq \frac{1}{2}\left(\frac{2\opnorm{E}}{d}\right)^{j+2} \frac{1}{1-\frac{2\opnorm{E}}{d}}
\end{equation}
having normalized $v_k$ so $v_k\trsp u_k=1$.
\end{theorem}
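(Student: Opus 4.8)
The plan is to turn the eigenrelation for $v_k$ into an exact self-consistent equation for the correction $w:=v_k-u_k$ using the reduced resolvent $R_k$, then recover the claimed expansion by expanding $(\id+\Delta_k)^{-1}$ as a Neumann series and estimating its tail. I would begin by recording the two identities that make $R_k$ useful: since $u_j\trsp u_k=0$ for $j\neq k$ we have $R_ku_k=0$, and since $R_k$ commutes with $M$ a direct computation gives $R_k(M-\lambda_k\id)=(M-\lambda_k\id)R_k=\id-u_ku_k\trsp$, the orthogonal projector onto $u_k^{\perp}$. From the spectral form of $R_k$ one also reads off $\opnorm{R_k}=1/d_k$.

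Next I would apply $R_k$ to the rearranged eigenrelation $(M-\lambda_k\id)v_k=(\lambda_k(S)-\lambda_k)v_k-Ev_k$. On the left, the projector identity together with the normalization $u_k\trsp v_k=1$ collapses $R_k(M-\lambda_k\id)v_k=(\id-u_ku_k\trsp)v_k$ to exactly $w$. On the right, $R_ku_k=0$ gives $R_kv_k=R_kw$ and $R_kEv_k=R_kEu_k+R_kEw$. Collecting the terms proportional to $w$ and using the definition $\Delta_k=R_k(E-(\lambda_k(S)-\lambda_k)\id)$ yields the key equation
\[
(\id+\Delta_k)\,w=-R_kEu_k .
\]

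To invert this I would bound $\opnorm{\Delta_k}\le\opnorm{R_k}\bigl(\opnorm{E}+|\lambda_k(S)-\lambda_k|\bigr)\le 2\opnorm{E}/d_k$, where $|\lambda_k(S)-\lambda_k|\le\opnorm{E}$ is Weyl's inequality. The hypothesis $\opnorm{E}<d_k/2$ then forces $\opnorm{\Delta_k}<1$, so $\id+\Delta_k$ is invertible and $w=-\sum_{m\ge0}(-1)^m\Delta_k^mR_kEu_k$. The vector whose norm is to be controlled is therefore precisely the Neumann tail $\sum_{m\ge j+1}(-1)^m\Delta_k^mR_kEu_k$, and submultiplicativity gives the bound $\opnorm{R_kEu_k}\,\opnorm{\Delta_k}^{j+1}/(1-\opnorm{\Delta_k})$. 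Since $\opnorm{R_kEu_k}\le\opnorm{R_k}\opnorm{E}=\opnorm{E}/d_k=\tfrac12(2\opnorm{E}/d_k)$, substituting the estimate on $\opnorm{\Delta_k}$ reproduces exactly $\tfrac12(2\opnorm{E}/d_k)^{j+2}/(1-2\opnorm{E}/d_k)$.

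The one delicate point is the derivation of the self-consistent equation: because $M-\lambda_k\id$ is singular one cannot invert it directly, and the whole argument hinges on applying $R_k$ instead and using $R_ku_k=0$ to annihilate the $u_k$-component so that the unknown $w$ regroups cleanly into $(\id+\Delta_k)w$. Once that equation is in hand the rest is a convergent geometric series controlled by Weyl's inequality; the fact that $\lambda_k(S)$ appears inside $\Delta_k$ is harmless, since it enters only as a scalar bounded by $\opnorm{E}$.
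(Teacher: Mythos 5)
Your proof is correct, and it arrives at the stated bound---including the factor $\tfrac12$---via the same estimates the paper uses: Weyl's inequality $|\lambda_k(S)-\lambda_k|\leq\opnorm{E}$, $\opnorm{R_k}=1/d_k$, hence $\opnorm{\Delta_k}\leq 2\opnorm{E}/d_k<1$, and $\norm{R_kEu_k}_2\leq\opnorm{E}/d_k$. The genuine difference is at the start: the paper does not derive the key equation but cites Kato's explicit formula $v-u=-(\id+R(E-\gamma\id))^{-1}REu$ (with $\gamma=\lambda_k(S)-\lambda_k$) and then manipulates the finite geometric series, whereas you prove that formula from scratch by applying $R_k$ to the eigenrelation $(M-\lambda_k\id)v_k=(\lambda_k(S)-\lambda_k)v_k-Ev_k$ and invoking the projector identity $R_k(M-\lambda_k\id)=\id-u_ku_k\trsp$ together with $R_ku_k=0$, which yields $(\id+\Delta_k)(v_k-u_k)=-R_kEu_k$. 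From there your Neumann-tail $-\sum_{m\geq j+1}(-1)^m\Delta_k^m R_kEu_k$ is exactly the paper's closed-form remainder $(-1)^j\Delta_k^{j+1}(\id+\Delta_k)^{-1}R_kEu_k$, so the two bounds coincide. What your route buys is self-containedness and transparency: in particular you make explicit that the factor $\tfrac12$ comes from bounding the last factor as $\norm{R_kEu_k}_2\leq\opnorm{E}/d_k$ (no contribution from $\gamma$ there), a point the paper compresses into ``putting all the elements together'' after rewriting the remainder as $(-1)^j\Delta_k^{j+2}(\id+\Delta_k)^{-1}u_k$; that rewriting only gives the claimed constant if one of the $j+2$ factors of $\Delta_k$ is applied to $u_k$ and bounded as $\norm{\Delta_k u_k}_2=\norm{R_kEu_k}_2\leq\opnorm{E}/d_k$ rather than by $\opnorm{\Delta_k}$. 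What the paper's route buys is brevity and a direct hook into Kato's general perturbation machinery, which it reuses elsewhere (e.g., for the eigenvalue approximation $u^TEu$ in the second-order expansion).
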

\begin{proof}
%
From now on we focus on $u_k$ and drop the dependence on $k$ in $u_k$, $v_k$, $R_k$, $\Delta_k$ etc... when this does not create confusion. We also use the notation $\lambda_S$ and $\lambda$ instead of $\lambda_k(S)$ and $\lambda_k$. If $v$ is normalized so that $v\trsp u=1$ (so $(v-u)\trsp u=0$), we have the explicit formula \cite[Eq.~3.29]{kato95}
$$
v-u=-(\id+R(E-\gamma \id))^{-1}REu\;,
$$
where $\gamma=\lambda_S-\lambda$. The formula is valid as soon as $(\id+R(E-\gamma \id))$ is invertible. Let us now call $\Delta=R(E-\gamma \id)$ and assume that $\Delta$ has no eigenvalues equal to -1, i.e $\id+\Delta$ is invertible. Then we have
\begin{equation}\label{eq:ExactKthOrderExpansionOfEigenvector}
v-u+\left[\sum_{m=0}^j (-1)^m \Delta^m \right]RE u=(-1)^j \Delta^{j+1}(\id+\Delta)^{-1}REu\;.
\end{equation}
We also have by construction $Ru=0$, so $REu=\Delta u$. Hence, we can write
$$
v-u+\left[\sum_{m=0}^j (-1)^m \Delta^m \right]RE u=(-1)^j \Delta^{j+2}(\id+\Delta)^{-1}u\;.
$$
Now let us call $d$ the separation distance of $\lambda$. Then $\opnorm{R}=1/d$. Our assumptions guarantee that $\opnorm{E}$ is such that $2\opnorm{E}/d<1$. We note that using Weyl's inequality, $|\lambda_S-\lambda|\leq \opnorm{S-M}=\opnorm{E}$, hence $\norm{\Delta}\leq 2\opnorm{R}\opnorm{E}=2\opnorm{E}/d$ and
$$
\opnorm{(\id+\Delta)^{-1}}\leq \frac{1}{1-\frac{2\opnorm{E}}{d}}\;.
$$
Putting all the elements together and recalling that $\norm{u}_2=1$, we get (\ref{eq:ApproxNormErrorVector}) from Equation \eqref{eq:ExactKthOrderExpansionOfEigenvector}.
\end{proof}
Spectral methods tend to focus on eigenvectors associated with extremal eigenvalues, so let us elaborate on the meaning of Theorem \ref{thm:ApproxEigenvectorsOrderk+2Accurate} for the eigenvector associated with the largest eigenvalue. If we suppose that the spectral norm of the residual matrix $E$ is smaller than half the separation distance of the largest eigenvalue, i.e
\BEQ\label{eq:pert-condition}
\|E\|_2 < (\lambda_1 - \lambda_2)/2\;,
\EEQ
the previous result (and results such as \cite[Theorem II.3.9]{kato95}) shows that we can use perturbation expansions to approximate the leading eigenvector of the subsampled matrix. Based on the bound in Equation \eqref{eq:norm-bound}, the condition stated in Equation \eqref{eq:pert-condition} will be satisfied (asymptotically with high-probability) if, for some $\eps>0$,
\[
\frac{\mu}{\sqrt{pn^{\alpha_{\mathrm{min}}}}} < (\lambda_1 - \lambda_2)/(4+\eps).
\]
We note that assumption \eqref{eq:pert-condition} is likely reasonable if one eigenvalue is very large compared to the others, which is a natural setting for methods such as PCA. (Note however that our result is not limited to the largest eigenvalue but actually applies to any eigenvalue of the original matrix $M$,~$\lambda$, for which $\opnorm{E}$ is smaller than half the distance from $\lambda$ to any other eigenvalue of $M$. In particular, the result would apply to several separated eigenvalues.) We also note that  the approximation
$$
v=u-\left[\sum_{m=0}^j (-1)^m \Delta^m \right]RE u
$$
is accurate to order $j+2$.

Let us now try to make our approximation slightly more explicit. If we write $R$ the reduced resolvent of $M$ (associated with $u_1$), and assume that $\lambda_1-\lambda_2$ stays bounded away from 0, we have in this setting, using Equation \eqref{eq:ApproxNormErrorVector} with $j=1$,
$$
v=u-REu+R(E-(\lambda_1(S)-\lambda_1)\,\id)RE u +O_P(\|E\|_2^3)\;,
$$
and therefore
\BEQ\label{eq:eigv-expansion}
v=u-REu+R(E-u^TEu\,\id)RE u +O_P(\|E\|_2^3)\;,
\EEQ
after we account for the fact that $u^TEu$ is an order-$\opnorm{E}^2$ accurate approximation of $\lambda_1(S)-\lambda_1$ \cite[Eq.~2.36 and 3.18]{kato95}. This approximation makes clear that a key component in the accuracy of our approximations will be the size of the vector $Eu$.
For simplicity here, we have normalized $v$ so that $v^Tu=1$; a similar result holds if we set $v^Tv=1$ instead, if for instance $\opnorm{E}\tendsto 0$ asymptotically.





\subsection{Second order accuracy result for eigenvectors by averaging}
In light of Equation \eqref{eq:eigv-expansion}, it is clear that $v$ is a first order accurate approximation of $u$, because of the presence of the (first-order) term $REu$ in the expansion. We now show that we can get a second order accurate approximation of the eigenvector $u$. Our results are based on an averaging procedure and hence are easy to implement in a distributed fashion. We have the following second-order accuracy result.
\begin{theorem}\label{thm:SecondOrderAccuracyAveragingProc}
Let us call $u_1$ the eigenvector associated with the largest eigenvalue of $M$, and $\nu_1=v_1/\norm{v_1}$ the eigenvector associated with the largest eigenvalue of $S$ and normalized so that $\norm{\nu_1}=1$ and $\nu_1\trsp u_1\geq 0$. Let us call $\xi=\mu/(pn^{\alpha_{\min}})^{1/2}$. Suppose that the assumptions of Theorem \ref{Thm:ControlOpNormErrorMatrix} are satisfied (hence $\xi\tendsto 0$).
Suppose also that $d=(\lambda_1-\lambda_2)$ satisfies
\begin{equation}\label{eq:pert-condition-Strong}
d\geq \xi\sqrt{\ln(\xi^{-2})}\;.
\end{equation}
Then we have
$$
\Expect\left[\norm{\nu_1-u_1}_2\right]=O\left(\frac{1}{(\lambda_1-\lambda_2)^2}\frac{\mu^2}{pn^{\alpha_{\min}}}\right)=O\left(\frac{\xi^2}{d^2}\right)\;.
$$
\end{theorem}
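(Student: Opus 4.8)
The plan is to work on the high-probability perturbative event, expand each subsampled eigenvector to second order, and exploit that the averaging underlying this subsection annihilates the mean-zero first-order term; the rare non-perturbative event is then absorbed using the concentration of $\opnorm{E}$ that hypothesis~\eqref{eq:pert-condition-Strong} is tailored to provide. Throughout I read the estimator $\nu_1$ produced by the averaging procedure as $\nu_1=\bar v/\norm{\bar v}_2$, where $\bar v=\tfrac1B\sum_{b=1}^B v^{(b)}$ averages the leading eigenvectors of $B$ independent subsampled matrices $S^{(b)}=M+E^{(b)}$ (the population-average limit $B\tendsto\infty$ being the cleanest case, in which $\nu_1$ is deterministic and $\Expect\norm{\nu_1-u_1}_2=\norm{\nu_1-u_1}_2$).

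First I would record the per-sample expansion. Fix a sample, write its leading eigenvector $v$ normalized by $v\trsp u_1=1$, put $w=v-u_1$ (so $w\perp u_1$), and let $\mathcal G=\{\opnorm{E}<d/2\}$. Equation~\eqref{eq:eigv-expansion} (Theorem~\ref{thm:ApproxEigenvectorsOrderk+2Accurate} with $j=1$) gives, on $\mathcal G$,
\[
v-u_1=-REu_1+R\!\left(E-u_1\trsp Eu_1\,\id\right)REu_1+\rho,\qquad \norm{\rho}_2=\gO{(\opnorm{E}/d)^3}.
\]
Passing to the unit-norm normalization is harmless at this order: from $\norm{v}_2^2=1+\norm{w}_2^2$ one gets $\nu-u_1=w-\tfrac12\norm{w}_2^2\,u_1+\gO{\norm{w}_2^3}$, so the normalization contributes only at order $\norm{w}_2^2=\gO{\xi^2/d^2}$ and the entire first-order content of $\nu-u_1$ is the single vector $-REu_1$.

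The decisive point is that $REu_1$ has mean zero and is averaged away. Because $E=\sqrt{(1-p)/p}\,M\circ C$ with $C$ centred, $\Expect[E]=0$ and $\Expect[REu_1]=0$; averaging replaces $REu_1$ by $R\bar Eu_1$, whose second moment is $\Expect\norm{R\bar Eu_1}_2^2=\tfrac1B\,\Expect\norm{REu_1}_2^2$. I would bound the latter by expanding in the eigenbasis, $\norm{REu_1}_2^2=\sum_{j\neq1}(\lambda_j-\lambda_1)^{-2}(u_j\trsp Eu_1)^2$ with $\Expect[(u_j\trsp Eu_1)^2]=\tfrac{1-p}{p}\sum_{k,l}u_j(k)^2M_{kl}^2u_1(l)^2$, and then summing: using $\sum_j u_j(k)^2=1$ and $(M^2)_{ll}=\sum_i\lambda_i^2u_i(l)^2$ together with the incoherence bounds of Assumption~\ref{Assump:BoundedIncoherence} (both $|\lambda_i|\norm{u_i}_\infty^2\le\mu\,n^{-\alpha_i}$ and $\norm{u_i}_\infty^2\ge n^{-\alpha_i}$ on the support) yields $\Expect\norm{REu_1}_2^2\le d^{-2}(1-p)\xi^2=\gO{\xi^2/d^2}$, exactly the scale appearing in Theorem~\ref{Thm:ControlOpNormErrorMatrix}. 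Thus $\Expect\norm{R\bar Eu_1}_2\le(\Expect\norm{R\bar Eu_1}_2^2)^{1/2}=\gO{\xi/(d\sqrt B)}$, which drops below the second-order scale $\xi^2/d^2$ once $B\gtrsim d^2/\xi^2$ (and vanishes in the population-average limit). The surviving terms are genuinely second order: $\Expect\bigl\|\tfrac1B\sum_b R(E^{(b)}-u_1\trsp E^{(b)}u_1\,\id)RE^{(b)}u_1\bigr\|_2\le\tfrac2{d^2}\Expect\opnorm{E}^2=\gO{\xi^2/d^2}$, the averaged remainder is $\gO{\xi^3/d^3}$, and the normalization correction is $\gO{\norm{\bar v-u_1}_2^2}=\gO{\xi^2/d^2}$. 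Collecting these gives $\Expect\bigl[\norm{\nu_1-u_1}_2\,\mathbf 1_{\mathcal G}\bigr]=\gO{\xi^2/d^2}$.

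Finally I would bound the contribution of the non-perturbative event $\mathcal G^c=\{\opnorm{E}\ge d/2\}$ by the trivial estimate $\norm{\nu_1-u_1}_2\le2$, so it is at most $2\,\mathbb P(\mathcal G^c)$. This is precisely where hypothesis~\eqref{eq:pert-condition-Strong} enters: $\opnorm{E}$ concentrates about its mean $\asymp\xi$ with sub-Gaussian fluctuations — via the Talagrand inequality already used for Theorem~\ref{Thm:ControlOpNormErrorMatrix} — so that $\mathbb P(\opnorm{E}\ge d/2)\le\exp(-c\,d^2/\xi^2)\le\exp(-c\ln\xi^{-2})=\gO{\xi^2}$ as soon as $d\ge\xi\sqrt{\ln(\xi^{-2})}$, contributing only $\gO{\xi^2}=\gO{\xi^2/d^2}$. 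I expect this tail step to be the main obstacle: one must pin down the sub-Gaussian fluctuation scale of $\opnorm{E}$ sharply enough that the exponent constant $c$ dominates the logarithmic factor in~\eqref{eq:pert-condition-Strong}, whereas the second-moment incoherence estimate parallels the proof of Theorem~\ref{Thm:ControlOpNormErrorMatrix}. Conceptually, the non-trivial feature the proof must capture is that it is the averaging of the mean-zero term $REu_1$ — not any cancellation within a single subsample — that upgrades the first-order accuracy of $v$ in~\eqref{eq:eigv-expansion} to the stated second-order bound on the expected norm $\Expect\norm{\nu_1-u_1}_2$.
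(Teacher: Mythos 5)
Your architecture matches the paper's in its main mechanisms: the $j=1$ expansion from Theorem \ref{thm:ApproxEigenvectorsOrderk+2Accurate}, the observation that $\Expect[E]=0$ kills the first-order term $REu_1$ in expectation, the passage to unit normalization so that the rare event costs at most $2P(\cdot)$, and Talagrand concentration of $\opnorm{E}$ combined with \eqref{eq:pert-condition-Strong}. But you prove a different statement than the one asked, and in doing so you miss the one genuinely new device in the paper's proof. In the theorem, $\nu_1$ is the unit eigenvector of a \emph{single} subsampled matrix $S$, and the expectation is over that one draw; the $B$-sample average is only the practical interpretation. Your instinct that a single subsample cannot be second order pathwise is sound (indeed your own computation gives $\Expect\norm{REu_1}_2^2\asymp\xi^2/d^2$, i.e.\ a first-order $\xi/d$ contribution to the norm), but the paper's resolution is to move the expectation \emph{inside} the norm rather than to introduce an explicit Monte Carlo average: the key object is the $\eps$-regularized vector $\vTildeEps$ (equal to $v$ unless $\opnorm{(\id+\Delta)^{-1}}>1/\eps$, and replaced by the explicit expansion $u-REu+\Delta REu$ otherwise), with the specific choice $\eps=2\xi/d$. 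Theorem \ref{thm:SecondOrderAccuracyRegularizedAveragingProc} bounds $\norm{\Expect[u-\vTildeEps]}_2\lesssim \xi^2/d^2+\xi^3/(\eps d^3)$ using $\Expect[REu]=0$ together with the moment bounds $\Expect\opnorm{E}^2,\Expect\opnorm{E}^3$ of Theorem \ref{thm:ControlOfE}, and this regularization exists precisely because, under the normalization $v\trsp u=1$, $\Expect[\norm{v}]$ cannot be controlled ($\opnorm{(\id+\Delta)^{-1}}$ has no usable moment bound) --- which is exactly the difficulty your ``cleanest case'' $B\tendsto\infty$ runs into head-on, since the population average $\Expect[v]$ you implicitly invoke is then not known to be well defined. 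Your finite-$B$ route has its own unaddressed cost: the expansion must hold for every sample, so the bad-event probability is multiplied by $B\gtrsim d^2/\xi^2$ via a union bound, and your resulting statement is sample-size dependent (first-order term only attenuated to $\xi/(d\sqrt{B})$), whereas the paper's bias statement is $B$-free.

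The tail step you flag as the main obstacle is, in the paper, not an obstacle at all, because the sub-Gaussian fluctuation scale of $\opnorm{E}$ is not $\xi$ but $\norm{M}_\infty/p$: Theorem \ref{thm:ControlOfE} gives $P(|\opnorm{E}-m_E|>t)\leq 4\exp(-p^2t^2/(8\norm{M}_\infty^2))$, and incoherence yields $\norm{M}_\infty\leq \mu n^{-\alpha_{\min}}$, hence $\norm{M}_\infty/p\leq \mu/(pn^{\alpha_{\min}})=\xi^2/\mu=\lo{\xi}$ --- far below the deviation $d/2-4\xi\geq d/3$ needed (the median satisfies $m_E\leq 3\xi$ for large $n$). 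The remaining comparison with $\xi^2/d^2$ is exactly the content of \eqref{eq:boundOnd}, including the dichotomy between $d<\exp(1)$ and $d>\exp(1)$; note that your chain $\exp(-c\ln(\xi^{-2}))=\gO{\xi^2}=\gO{\xi^2/d^2}$ silently assumes both $c\geq 1$ and $d=O(1)$, and fails for large $d$ since $\gO{\xi^2}$ is then not $\gO{\xi^2/d^2}$. So: same skeleton, but two concrete gaps relative to the paper --- the regularization device $\vTildeEps$ (with $\eps=2\xi/d$) that makes the expectation of a single subsampled eigenvector rigorous, and the incoherence bound $\norm{M}_\infty^2/p^2=\lo{\xi^2}$ that closes the deviation estimate you left open.
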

Practically, this means that if we average eigenvectors over many subsampled matrices (after removing indeterminacy by always making the first component positive), the residual error will be of order $\|E\|_2^2/d^2$ with
\[
\limsup_{n\tendsto \infty} \|E\|_2^2\leq 4 \frac{\mu^2}{pn^{\alpha_{\mathrm{min}}}}.
\]
In other words, by averaging subsampled eigenvectors, we gain an order of accuracy (over the method that would just take one subsampled eigenvector) by canceling the effect of the first order residual term $REu$.

\begin{proof}
To keep notations simple, we drop the index 1 in $\nu$ and $u$ in the proof (so $\nu_1=\nu$ and $u_1=u$).
In what follows, $\kappa$ is a generic constant that may change from display to display. Before we start the proof per se, let us make a few remarks.

First, there is a technical difficulty when trying to work directly with $v$, namely the fact that it appears difficult to control $\Expect\left[\opnorm{(\id+\Delta)^{-1}}\right]$ and hence to get a bound on $\Expect[\norm{v-u}]$ (with the normalization $v\trsp u=1$, $\norm{v}$ could be very large; our bounds show that this can happen with only low probability but obviously $\Expect[\norm{v}]$ could still be large). To go around this difficulty, we need two steps: first, we work with unit eigenvectors (so we go from $v$ to $\nu$), and second we need a ``regularization" step and will replace $v$ by a vector $\vTildeEps$ which is equal to $v$ with high-probability and for which we can control $\Expect[\norm{\vTildeEps-u}]$. More precisely, for $\eps>0$, we call $\vTildeEps$ the vector such that
$$
\vTildeEps=
\begin{cases}
v \text{ if } \opnorm{(\id+\Delta)^{-1}}\leq \frac{1}{\eps}\\
u-REu+\Delta RE u \text{ otherwise.}
\end{cases}
$$
Its properties are studied in Theorem \ref{thm:SecondOrderAccuracyRegularizedAveragingProc}. We call it below the $\eps$-regularized version of $v$.

We note that under the assumptions of the current theorem we have $\frac{\xi}{d}\tendsto 0$, so the results of Theorem \ref{thm:SecondOrderAccuracyRegularizedAveragingProc} apply. In particular, as shown in the proof of that Theorem, we have $\norm{M}_{\infty}^2/p^2=\lo{\xi^2}$. Also, Assumption 1 (which is made in Theorem \ref{Thm:ControlOpNormErrorMatrix}), means $\mu$ is fixed so $\xi\tendsto 0$, as $pn^{\alpha_{\min}}\tendsto \infty$.

If $v$ is the eigenvector of $S$ associated with its largest eigenvalue, using the fact that $(v-u)\trsp u=0$ by construction, we have
$$
\norm{v}_2^2=\norm{v-u}_2^2+\norm{u}_2^2=1+\norm{v-u}_2^2
$$
hence
$$
\nu=\frac{v}{\sqrt{1+\norm{v-u}_2^2}}\;.
$$
Turning our attention to $\vTildeEps$, we see that, since $Ru=0$ by construction and $R$ is symmetric,  $u\trsp \Delta=0$, so $(\tilde{v}_{\eps}-u)\trsp u=0$, and hence
$$
\norm{\tilde{v}_{\eps}}_2^2=1+\norm{\tilde{v}_{\eps}-u}_2^2\;.
$$
Now let us call
$$
\beta=\frac{\tilde{v}_{\eps}}{\sqrt{1+\norm{\tilde{v}_{\eps}-u}_2^2}}\;,
$$
we see that $\beta=\nu$ as long as $\opnorm{(\id+\Delta)^{-1}}\leq 1/\eps$, since when this happens, $v=\tilde{v}_{\eps}$. Now we have
\begin{align*}
\Expect[\norm{u-\nu}_2]&=\Expect[\norm{u-\nu}_2 1_{\nu=\beta}]+\Expect[\norm{u-\nu}_2 1_{\nu\neq \beta}]\\
&\leq \Expect[\norm{u-\beta}_2 1_{\nu=\beta}]+\Expect[\norm{u-\nu}_2 1_{\nu\neq \beta}]\\
&\leq \Expect[\norm{u-\beta}_2]+2 P(\nu\neq \beta)\;,
\end{align*}
since $\norm{u-\nu}_2\leq \norm{u}_2+\norm{\nu}_2=2$ (note the importance of the change of normalization here, as this bound would not hold with $v$ instead of $\nu$). Let us now work on controlling both these quantities. For reasons that will be clear later, we now take $\eps=2\xi/d$.
\paragraph{Control of $\bm{\Expect[\norm{u-\beta}_2].}$} Given that $u-\beta=(u-\vTildeEps)/\sqrt{1+\norm{u-\vTildeEps}_2^2}+u(1-1/\sqrt{1+\norm{u-\vTildeEps}_2^2})$, we have
\begin{align*}
\norm{u-\beta}_2&\leq \frac{\norm{u-\vTildeEps}_2}{\sqrt{1+\norm{u-\vTildeEps}_2^2}}+\norm{u}_2\left(1-\frac{1}{\sqrt{1+\norm{u-\vTildeEps}_2^2}}\right)\\
&\leq \norm{u-\vTildeEps}_2 + (\sqrt{1+\norm{u-\vTildeEps}_2^2}-1)\\
&\leq 2\norm{u-\vTildeEps}_2\;,
\end{align*}
since $\sqrt{1+x^2}\leq 1+x$ for $x\geq 0$.
Let us call $\mu/(pn^{\alpha_{min}})^{1/2}=\xi$ and $d=\lambda_1-\lambda_2$.
We show in Theorem \ref{thm:SecondOrderAccuracyRegularizedAveragingProc} that, for some $\kappa>0$, asymptotically
$$
\Expect\left[\norm{u-\vTildeEps}_2\right]\leq \kappa (\frac{\xi^2}{d^2}+\frac{\xi^3}{d^3\eps})\;
$$
so when $\eps>\xi/d$, we have $\Expect\left[\norm{u-\vTildeEps}_2\right]\leq \kappa \frac{\xi^2}{d^2}$ and therefore
$$
\Expect\left[\norm{u-\beta}_2\right]\leq \kappa \frac{\xi^2}{d^2}.
$$
\paragraph{Control of $\bm{P(\nu\neq \beta).}$} We have (essentially) seen in the proof of Theorem \ref{thm:ApproxEigenvectorsOrderk+2Accurate} above that if $2\opnorm{E}/d<1-\eps$, then $\opnorm{(\id+\Delta)^{-1}}\leq 1/\eps$ (see also the proof of Theorem \ref{thm:SecondOrderAccuracyRegularizedAveragingProc}). Hence
$$
P\left(\opnorm{(\id+\Delta)^{-1}}> 1/\eps\right)\leq P\left(\opnorm{E}>\frac{(1-\eps)d}{2}\right)\;.
$$
Recall that we have now chosen $\eps=2 \xi/d$. In that case, we have
$$
\frac{(1-\eps)d}{2}=\frac{d}{2}-\xi\;.
$$
Now we show the following deviation inequality in Theorem \ref{thm:ControlOfE}: if $m_E$ is a median of $\opnorm{E}$,
$$
P\left(\left|\opnorm{E}-m_E\right|>t\right)\leq 4 \exp\left(-\frac{p^2}{8\norm{M}_{\infty}^2}t^2\right)\;.
$$
Recall also that for $n$ large enough $0\leq m_E\leq 3 \xi$ when the conditions of Theorem \ref{Thm:ControlOpNormErrorMatrix} apply (see Theorems \ref{Thm:ControlOpNormErrorMatrix} or arguments at the end of the proof of Theorem \ref{thm:ControlMaxOpNormsMatrices}). Suppose now that $n$ is such
that indeed $m_E\leq 3 \xi$.
Then if $\frac{d}{2}-4\xi>0$, we have
$$
P\left(\opnorm{E}>\frac{(1-\eps)d}{2}\right)\leq P\left(\left|\opnorm{E}-m_E\right|>\frac{(1-\eps)d}{2}-m_E\right)\leq P\left(\left|\opnorm{E}-m_E\right|>\frac{d}{2}-4\xi\right)\;.
$$
Now when $\xi/d\tendsto 0$, $\frac{d}{2}-4\xi\geq \frac{d}{3}$ asymptotically. Since we assumed that $d\geq \xi\sqrt{\ln(\xi^{-2})}$ and $\xi\tendsto 0$, we indeed have $\xi/d\tendsto 0$.
Therefore,
$$
P\left(\opnorm{E}>\frac{(1-\eps)d}{2}\right)\leq 4 \exp\left(-\frac{p^2}{72\norm{M}_{\infty}^2}d^2\right).
$$
All we have to do now is to verify that the asymptotics we consider, the quantity on the right-hand side of the previous equation remains less than $\xi^2/d^2$ asymptotically. Elementary algebra shows that this is equivalent to saying that
\begin{equation}\label{eq:boundOnd}
d^2-72\frac{\norm{M}_{\infty}^2}{p^2}\ln(d^2)\geq 72\frac{\norm{M}_{\infty}^2}{p^2}(-\ln(\xi^2)+\ln 4)\;.
\end{equation}
We have $\norm{M}_{\infty}^2/p^2=\lo{\xi^2}$, so the right-hand side is going to zero. In particular, we see that when $d\geq \xi\sqrt{\ln(\xi^{-2})}$, as we assume, the inequality above is satisfied asymptotically. As a matter of fact, when $d<\exp(1)$,
$$
d^2-72\frac{\norm{M}_{\infty}^2}{p^2}\ln(d^2)\geq d^2\;,
$$
and the result comes out of the fact that $\frac{\norm{M}_{\infty}^2}{p^2}=\lo{\xi^2}$. If $d>\exp(1)$, the result is obvious as the right-hand side of Equation \eqref{eq:boundOnd} goes to 0 asymptotically, while the left-hand side
is asymptotically larger than $\exp(2)/2$ for instance.
So we have shown that under our assumptions,
$$
P(\nu\neq \beta)\leq \frac{\xi^2}{d^2}\;.
$$
We can finally conclude that
$$
\Expect[\opnorm{\nu-u}]\leq \kappa \frac{\xi^2}{d^2}\;,
$$
as announced in the theorem.
\end{proof}
This result applies to all eigenvectors corresponding to eigenvalues whose isolation distance (i.e distance to the nearest eigenvalue) satisfies the separation condition \eqref{eq:pert-condition-Strong}, which is a strong version of the separation condition \eqref{eq:pert-condition}. We note that we need the strong separation condition (Equation \eqref{eq:pert-condition-Strong}) to be able to take expectations rigorously.

Finally, we note that theoretical as well as practical considerations seem to indicate that condition (\ref{eq:pert-condition}) (and hence \eqref{eq:pert-condition-Strong}) is quite conservative. On the theoretical side, we see with Equation \eqref{eq:ExactKthOrderExpansionOfEigenvector} that what really matters for the quality of the approximation is the norm of the vector
$$
l_j=\Delta^{j+2}(\id+\Delta)^{-1}u\;,
$$
or its expectation. We used in our approximations the coarse bound $\opnorm{\Delta}\leq 2 \opnorm{R}\opnorm{E}$, which is convenient because it does not require us to have information about the eigenvectors of $\Delta$. However, we see that the norm of $l_j$ could be small even when $\opnorm{R}\opnorm{E}$ is not very small, for instance if $u$ belonged to a subspace spanned by eigenvectors of $\Delta$ associated with eigenvalues of this matrix that are small in absolute value. So it is quite possible that our method could work in a somewhat larger range of situations than the one for which we have theoretical guarantees. This is what our simulations below seem to indicate.

\subsection{Variance}
\label{ss:variance} The expansion in Equation $\eqref{eq:eigv-expansion}$ also allows us to approximate the variance of the first-order residual $REu$ after subsampling. This is useful in practice because it gives us an idea of how many independent computations we need to make to essentially void the effect of the first order term in the expansion of $v$. In terms of distributed computing, it therefore tells us how many machines we should involve in the computation. We have the following theorem.

\begin{theorem}\label{thm:VarianceREu}
Let $u_1$ be the eigenvector associated with $\lambda_1$, the largest eigenvalue of $M$. Let us call $w_1=u_1\circ u_1$, and ${\cal M}=M\circ M$. Then
\[
\Expect[\|REu_1\|_2^2] \leq \frac{1}{(\lambda_2-\lambda_1)^2} \frac{1-p}{p} \left(\sum_{k=1}^n u_1(k)^2 \|M_k\|_2^2-\left[2 w_1^T {\cal M} w_1 -\sum_{k=1}^n w_1^2(k) {\cal M}_{kk}\right]\right)\;.
\]
Assuming w.l.o.g. that $\lambda_1=\|M\|_2$, this bound yields in particular
\BEQ\label{eq:var-eigv}
\Expect[\|REu_1\|_2^2] \leq \frac{1}{(1-\lambda_2/\lambda_1)^2} \|u_1\|_{\infty}^2 \frac{\NumRank(M)}{p}
\EEQ
where $\NumRank(M)=\|M\|_F^2/\|M\|_2^2$ is the numerical rank of the matrix $M$ and is a stable relaxation of the rank, satisfying $1\leq \NumRank(M) \leq \Rank(M) \leq n$ (see \cite{Rude07} for a discussion).
\end{theorem}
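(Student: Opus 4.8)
The plan is to exploit the identity $Ru_1=0$ to replace the crude submultiplicative bound $\opnorm{REu_1}\le\opnorm{R}\,\norm{Eu_1}_2$ by a sharper one, and then to evaluate the two resulting second moments exactly. Since $R$ is symmetric with $Ru_1=0$, we have $REu_1=R(\id-u_1u_1\trsp)Eu_1$, so that $\opnorm{REu_1}\le\opnorm{R}\,\norm{(\id-u_1u_1\trsp)Eu_1}_2$. Because $\lambda_1$ is the largest eigenvalue and $R=\sum_{j\neq1}(\lambda_j-\lambda_1)^{-1}u_ju_j\trsp$, its spectral norm is $\opnorm{R}=1/(\lambda_1-\lambda_2)$; and the Pythagorean identity gives $\norm{(\id-u_1u_1\trsp)Eu_1}_2^2=\norm{Eu_1}_2^2-(u_1\trsp Eu_1)^2$. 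Taking expectations then reduces the theorem to computing $\Expect[\norm{Eu_1}_2^2]$ and $\Expect[(u_1\trsp Eu_1)^2]$, after which the first displayed bound follows from $\Expect[\opnorm{REu_1}^2]\le(\lambda_1-\lambda_2)^{-2}\big(\Expect[\norm{Eu_1}_2^2]-\Expect[(u_1\trsp Eu_1)^2]\big)$.

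For both moments I would substitute $E=\sqrt{(1-p)/p}\,(M\circ C)$ from \eqref{eq:sampling-decomp} and expand in the entries of $C$, using that $C$ is symmetric with independent on-and-above-diagonal entries of mean zero and variance one, so that $\Expect[C_{kl}C_{k'l'}]$ equals $1$ exactly when $\{k,l\}=\{k',l'\}$ and $0$ otherwise. For the first moment, writing $[Eu_1]_k=\sqrt{(1-p)/p}\sum_l M_{kl}C_{kl}u_1(l)$ and squaring, the covariance structure kills all cross terms within a fixed row $k$, leaving $\Expect[\norm{Eu_1}_2^2]=\frac{1-p}{p}\sum_k\sum_l M_{kl}^2u_1(l)^2=\frac{1-p}{p}\sum_l u_1(l)^2\norm{M_l}_2^2$, where $M$ symmetric makes the $l$-th column and row norms agree.

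The delicate computation, and the main obstacle, is the second moment $\Expect[(u_1\trsp Eu_1)^2]$, where the symmetry of $C$ must be tracked carefully. Writing $u_1\trsp Eu_1=\sqrt{(1-p)/p}\sum_{k,l}u_1(k)u_1(l)M_{kl}C_{kl}$ and squaring, each \emph{off-diagonal} unordered pair $\{k,l\}$ is hit twice — once through $(k,l)$ and once through $(l,k)$ — producing a factor $2$, whereas the \emph{diagonal} pairs are hit once. With $w_1=u_1\circ u_1$ and ${\cal M}=M\circ M$ this yields $\Expect[(u_1\trsp Eu_1)^2]=\frac{1-p}{p}\big(2w_1\trsp{\cal M}w_1-\sum_k w_1^2(k){\cal M}_{kk}\big)$, the subtracted diagonal term being precisely what reconciles the double count on off-diagonal pairs with the single count on the diagonal. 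Substituting both moments into the reduction above gives exactly the first claimed inequality.

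Finally, for the numerical-rank bound \eqref{eq:var-eigv} I would discard the bracketed correction term, which is nonnegative because it equals $\frac{p}{1-p}\Expect[(u_1\trsp Eu_1)^2]\ge0$, and bound $\frac{1-p}{p}\le\frac1p$. Then $\sum_k u_1(k)^2\norm{M_k}_2^2\le\norm{u_1}_\infty^2\sum_k\norm{M_k}_2^2=\norm{u_1}_\infty^2\Frobnorm{M}^2$, and under the normalization $\lambda_1=\opnorm{M}$ we write $(\lambda_1-\lambda_2)^2=\opnorm{M}^2(1-\lambda_2/\lambda_1)^2$, which turns the prefactor into $(1-\lambda_2/\lambda_1)^{-2}\opnorm{M}^{-2}$ so that $\Frobnorm{M}^2/\opnorm{M}^2=\NumRank(M)$ emerges and \eqref{eq:var-eigv} follows.
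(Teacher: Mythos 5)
Your proposal is correct and takes essentially the same route as the paper: your projection step $REu_1=R(\id-u_1u_1\trsp)Eu_1$ together with $\opnorm{R}=1/(\lambda_1-\lambda_2)$ and Pythagoras is just a rephrasing of the paper's eigenbasis/Parseval argument giving $\Expect[\|REu_1\|_2^2]\leq (\lambda_2-\lambda_1)^{-2}\left(\Expect[u_1\trsp E^2u_1]-\var(u_1\trsp Eu_1)\right)$, and your two moment computations (cross terms killed by independence of the above-diagonal entries of $C$, and the factor-of-two bookkeeping for off-diagonal unordered pairs) reproduce the paper's appendix calculations of $\Expect[E^2]$ and $\var(u_1\trsp Eu_1)$ term for term. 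The passage to the numerical-rank bound \eqref{eq:var-eigv} (drop the nonnegative bracketed term, bound $\sum_k u_1(k)^2\|M_k\|_2^2\leq\|u_1\|_\infty^2\Frobnorm{M}^2$, normalize $\lambda_1=\opnorm{M}$) is likewise the paper's argument made explicit.
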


\begin{proof}
By construction, $\Expect[E]=0$ and
\BEAS
\Expect[\|REu_1\|_2^2] &=& \Expect[u_1^TER^2Eu_1]= \sum_{j=2}^n \Expect[\frac{(u_1^TEu_j)^2}{(\lambda_j-\lambda_1)^2}],
\EEAS
by definition of $R$. Now
$$
\sum_{j=1}^n (u_1^TEu_j)^2 = \norm{Eu_1}_2^2=u_1^TE^2 u_1\;,$$
because $E$ is symmetric, the $u_i$'s form an orthonormal basis and $u_1^TEu_j$ is the $j$-th coefficient of $Eu_1$ in this basis, so the sum of the squared coefficients is the squared norm of the vector. Hence
\BEAS
\Expect[\|REu_1\|_2^2]\leq \frac{1}{(\lambda_2-\lambda_1)^2}\left(\Expect[u_1^TE^2 u_1]-\var(u_1^T E u_1)\right)\;.
\EEAS
The variance of $u_1^T E u_1$ is easy to compute if we rewrite this quantity as a sum of independent random variables.
Also, separate computations (see Appendix, Subsection \ref{ss:AppendixVarianceComputations}) show that $\Expect[E^2]$ is a diagonal matrix, whose $i$-th diagonal entry is $(1-p)\|M_i\|_2^2/p$, where $M_i$ is the $i$-th column of $M$.  Hence, in that case, having defined $w_1=u_1\circ u_1$ and ${\cal M}=M\circ M$, we get
\[
\Expect[\|REu_1\|_2^2] \leq \frac{1}{(\lambda_2-\lambda_1)^2} \frac{1-p}{p} \left(\sum_{k=1}^n u_1(k)^2 \|M_k\|_2^2-\left[2 w_1^T {\cal M} w_1 -\sum_{k=1}^n w_1^2(k) {\cal M}_{kk}\right]\right)\;.
\]
Assuming w.l.o.g. that $\lambda_1=\|M\|_2$, we get (\ref{eq:var-eigv}).
\end{proof}

\subsection{Nonsymmetric matrices}
\label{ss:nonsym} The results described above are easily extended to nonsymmetric matrices. Here  $M\in\reals^{m\times n}$, with $m\geq n$ and we write its spectral decomposition
\[
M=\sum_{i=1}^n \sigma_i u_iv_i^T,
\]
where $u_i\in\reals^n$, $v_i\in \reals^m$ and $\sigma_i>0$. We can adapt the definition of incoherence to
\[
\mu(M,\alpha,\beta)=\sum_{i=1}^n \sigma_i n^{\alpha_i/2}\|u_i\|_\infty m^{\beta_i/2}\|v_i\|_\infty
\]
and reformulate our main assumption on $M$ as follows.
\begin{assumption}
There are vectors $\alpha\in[0,1]^n$ and $\beta\in[0,1]^n$ for which
\[
\mu(M,\alpha,\beta)\leq \mu \quad \mbox{and} \quad \Card(u_i)\leq n^{\alpha_i},~\Card(v_i)\leq m^{\beta_i}, \quad i=1,\ldots,n
\]
as $m,n$ go to infinity with $m=\rho n$ for a given $\rho>1$, where $\mu$ is an absolute constant.
\end{assumption}
In this setting, using again \cite[Th.~5.5.19]{Horn91}, we get
\BEQ
\left\| \sum_{i=1}^n \sigma_i C\circ  (u_iv_i^T)  \right\|_2 \leq \sum_{i=1}^n \sigma_i n^{\alpha_i/4}\|u_i\|_\infty m^{\beta_i/4}\|v_i\|_\infty \left\| \frac{C_{\alpha_i,\beta_i}}{n^{\alpha_{i/4}}m^{\beta_i/4}} \right\|_2
\EEQ
where we have assumed that $u_i,v_i$ are sparse and $C_{\alpha_i,\beta_i}$ is a $n^{\alpha_i}\times m^{\beta_i}$ submatrix of~$C$. As in (\ref{eq:norm-bound}), we can then bound the spectral norm of the residual and we have
\BEQ\label{eq:norm-bound-nonsym}
\limsup_{n\tendsto \infty} \|E\|_2 \leq \frac{2 \mu}{\sqrt{pn^{\frac{\alpha_{\mathrm{min}}}{2}}m^{\frac{\beta_{\mathrm{min}}}{2}}}}.
\EEQ
almost surely. Perturbation results similar to (\ref{eq:eigv-expansion}) for left and right eigenvectors are detailed in \cite{Stew98} for example.
\section{Numerical experiments}
\label{s:mum}
In this section, we study the numerical performance of the subsampling/averaging results detailed above on both artificial and realistic data matrices

\paragraph{Dense matrices: PCA, SVD, etc.}
We first illustrate our results by approximating the leading eigenvector of a matrix $M$ as the average of leading eigenvectors of subsampled matrices, for various values of the sampling probability $p$. To start with a naturally structured dense matrix, we form $M$ as the covariance matrix of the 500 most active genes in the colon cancer data set in \cite{Alon99}. We let $p$ vary from $10^{-4}$ to 1 and for each $p$, we compute the leading eigenvector of 1000 subsampled matrices, average these vectors and normalize the result. We call $u$ the true leading eigenvector of $M$ and $v$ the approximate one. We now normalize $v$ so that $\norm{v}_2=1$ (which is standard, but different from the normalization we used in our theoretical investigations where we had $u\trsp v=1$).

In Figure \ref{fig:phase-transition}, we plot $u^Tv$ as a function of $p$ together with the median of $u^Tv$ sampled over all individual subsampled matrices, with dotted lines at plus and minus one standard deviation. We also record the proportion of samples where $\|E\|$ satisfies the perturbation condition (\ref{eq:pert-condition}).

\begin{figure}[ht]
\begin{center}
\begin{tabular}{cc}
\psfrag{p}[t][b]{$p$}
\psfrag{cos}[b][t]{$u^Tv$}
\includegraphics[width=0.49 \textwidth]{./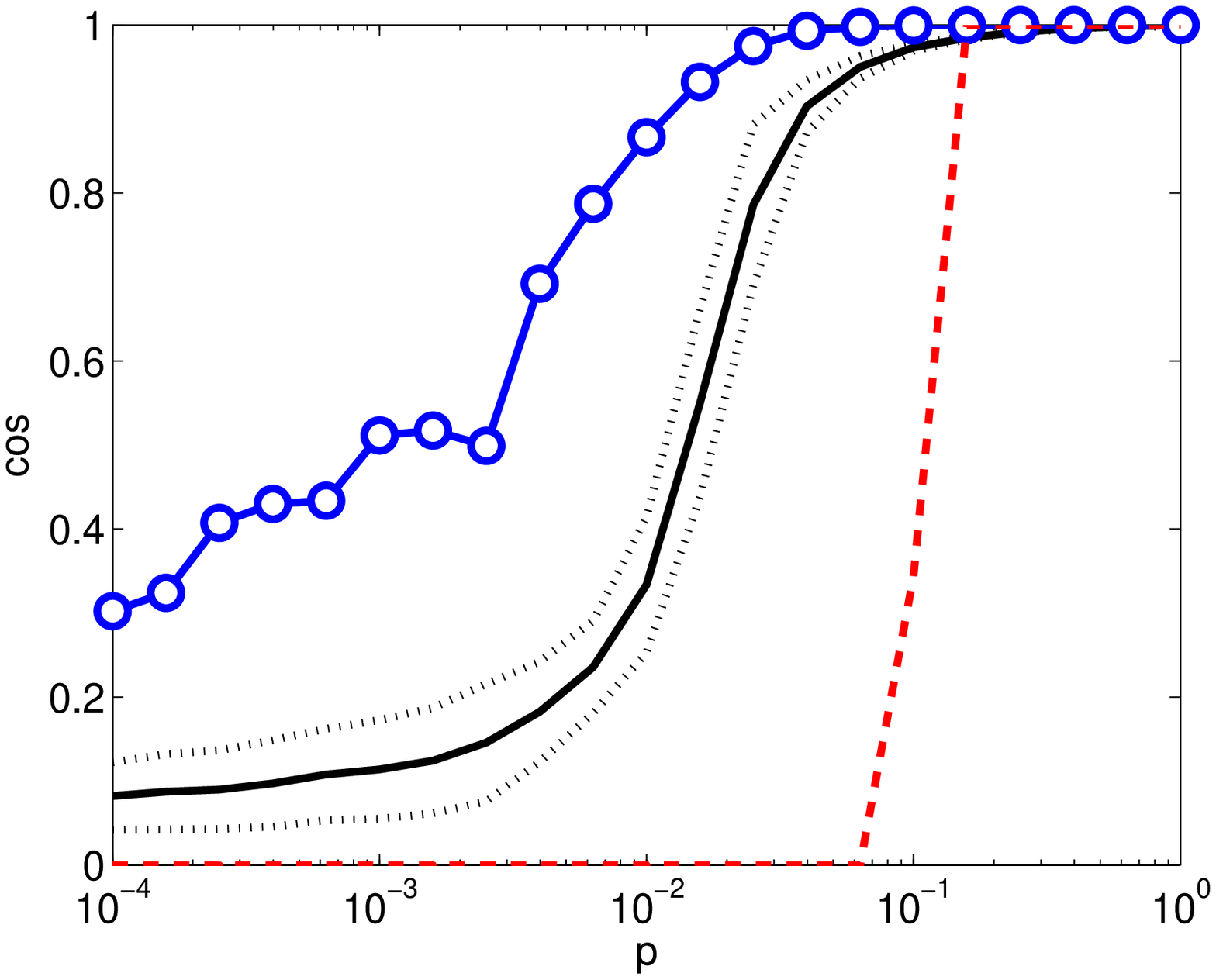}&
\psfrag{p}[t][b]{$p$}
\psfrag{cos}[b][t]{$u^Tv$}
\includegraphics[width=0.49\textwidth]{./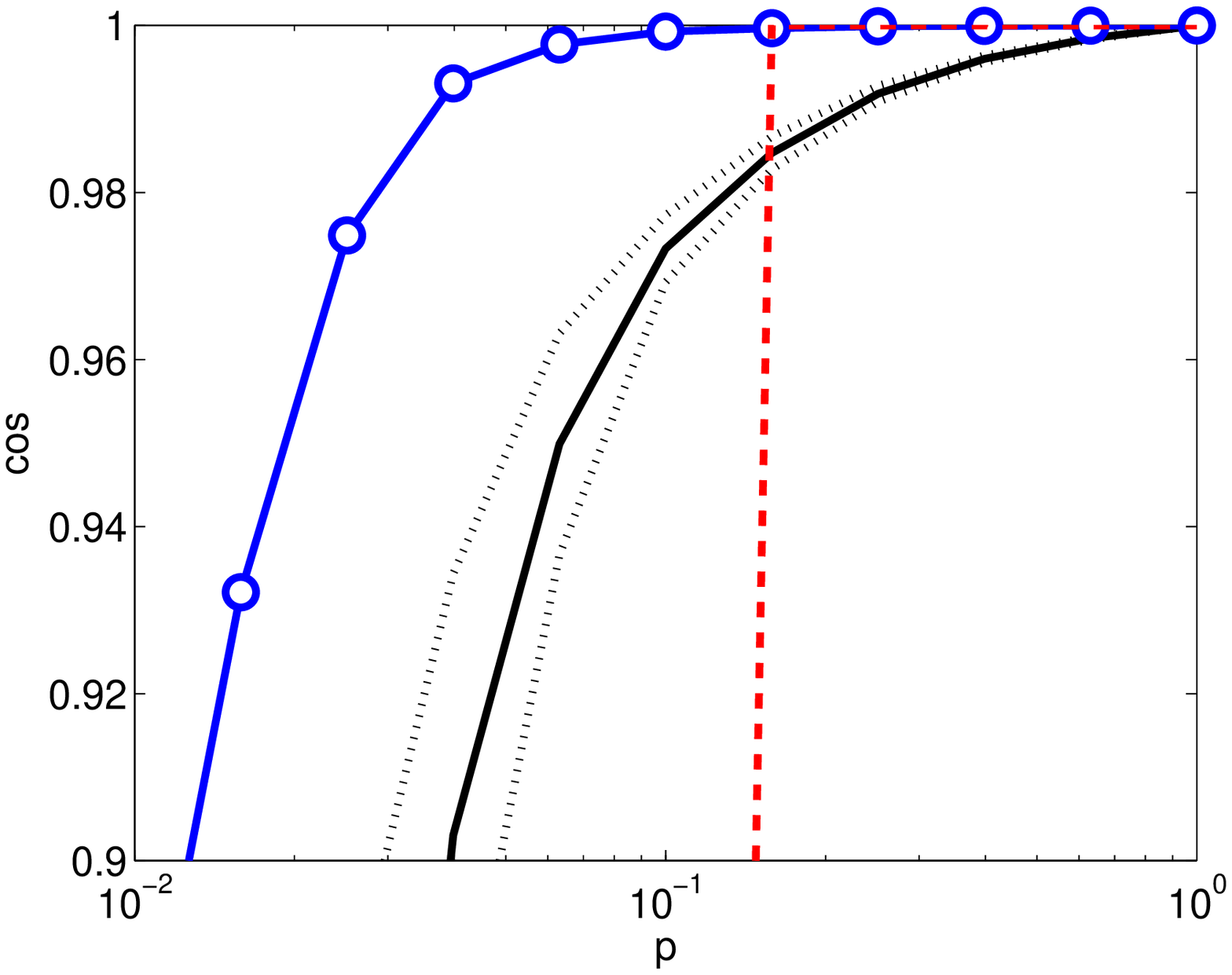}
\end{tabular}
\caption{\textit{Left:} Alignment $u^Tv$ between the true and the normalized average of 1000 subsampled eigenvectors (blue circles), median value of $u^Tv$ over all sampled matrices (solid black line), with dotted lines at plus and minus one standard deviation and proportion of samples satisfying the perturbation condition (\ref{eq:pert-condition}) (dashed red line), for various values of the sampling probability $p$ on a gene expression covariance matrix. \textit{Right:} Zoom on the the interval $p\in[10^{-2},1]$.
\label{fig:phase-transition}}
\end{center}
\end{figure}

We repeat this experiment on a (nonsymmetric) term-document matrix formed using press release data from PRnewswire, to test the impact of subsampling on Latent Semantic Indexing results. Once again, we let $p$ vary from $10^{-2}$ to 1 and for each $p$, we compute the leading eigenvector of 1000 subsampled matrices, average these vectors and normalize the result. We call $u$ the true leading eigenvector of $M$ and $v$ the approximate one. In Figure \ref{fig:phase-transition-svd} on the left, we plot $u^Tv$ as a function of $p$ together with the median of $u^Tv$ sampled over all individual subsampled matrices, with dotted lines at plus and minus one standard deviation. The matrix $M$ is $6779\times11171$ with spectral gap $\sigma_2/\sigma_1=0.66$. 

In Figure \ref{fig:phase-transition-svd} on the right, we plot the ratio of CPU time for subsampling a gene expression matrix of dimension 2000 and computing the leading eigenvector of the subsampled matrix (on a single machine), over CPU time for computing the leading eigenvector of the original matrix. Two regimes appear, one where the eigenvalue computation dominates with computation cost scaling with $p$, another where the sampling cost dominates and the speedup is simply the ratio between sampling time and the CPU cost of a full eigenvector computation. Of course, the principal computational benefit of subsampling is the fact that memory usage is directly proportional to $p$.

\begin{figure}[ht]
\begin{center}
\begin{tabular}{cc}
\psfrag{p}[t][b]{$p$}
\psfrag{cos}[b][t]{$u^Tv$}
\includegraphics[width=0.49 \textwidth]{./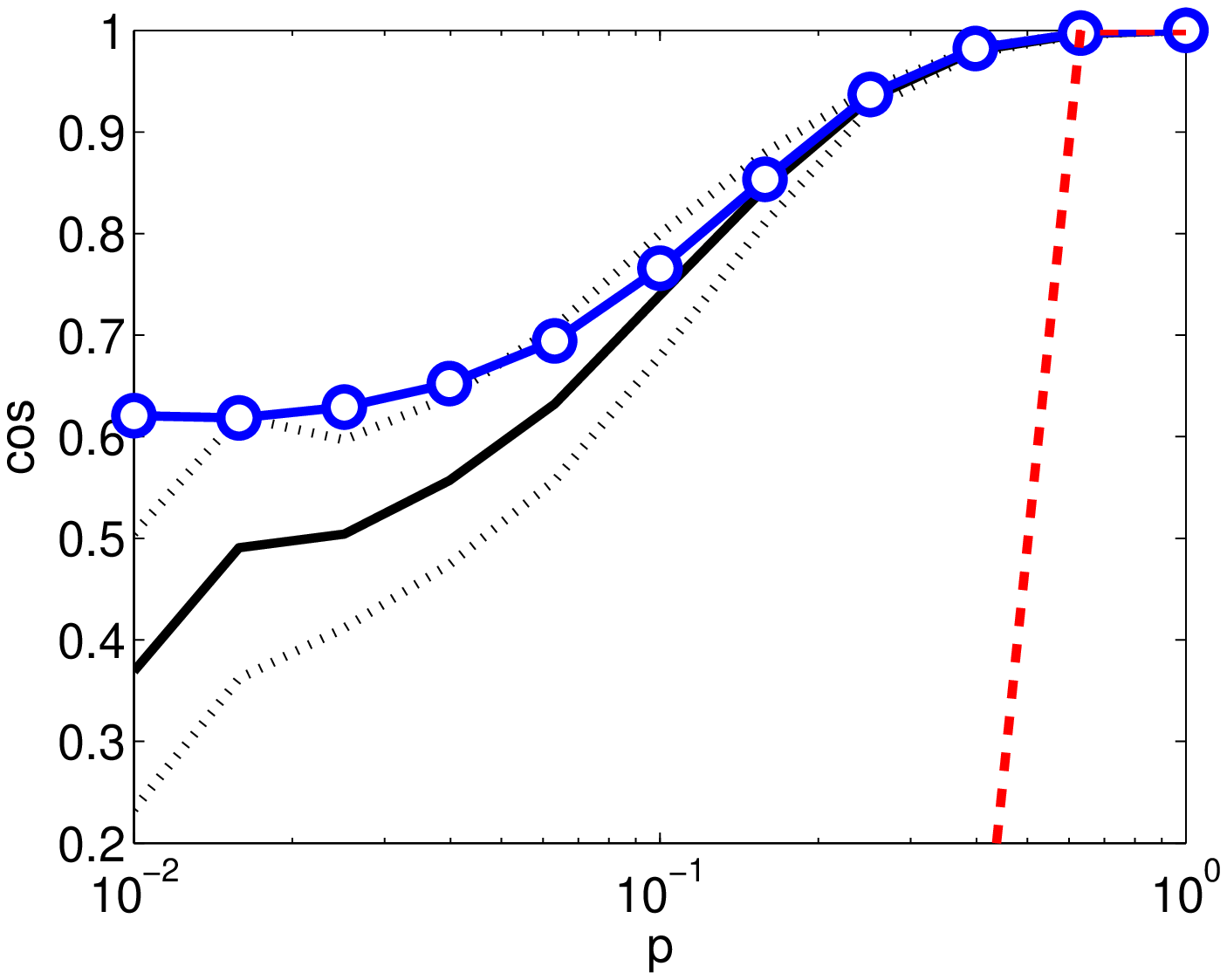}&
\psfrag{p}[t][b]{$p$}
\psfrag{speedup}[b][t]{Speedup}
\includegraphics[width=0.49\textwidth]{./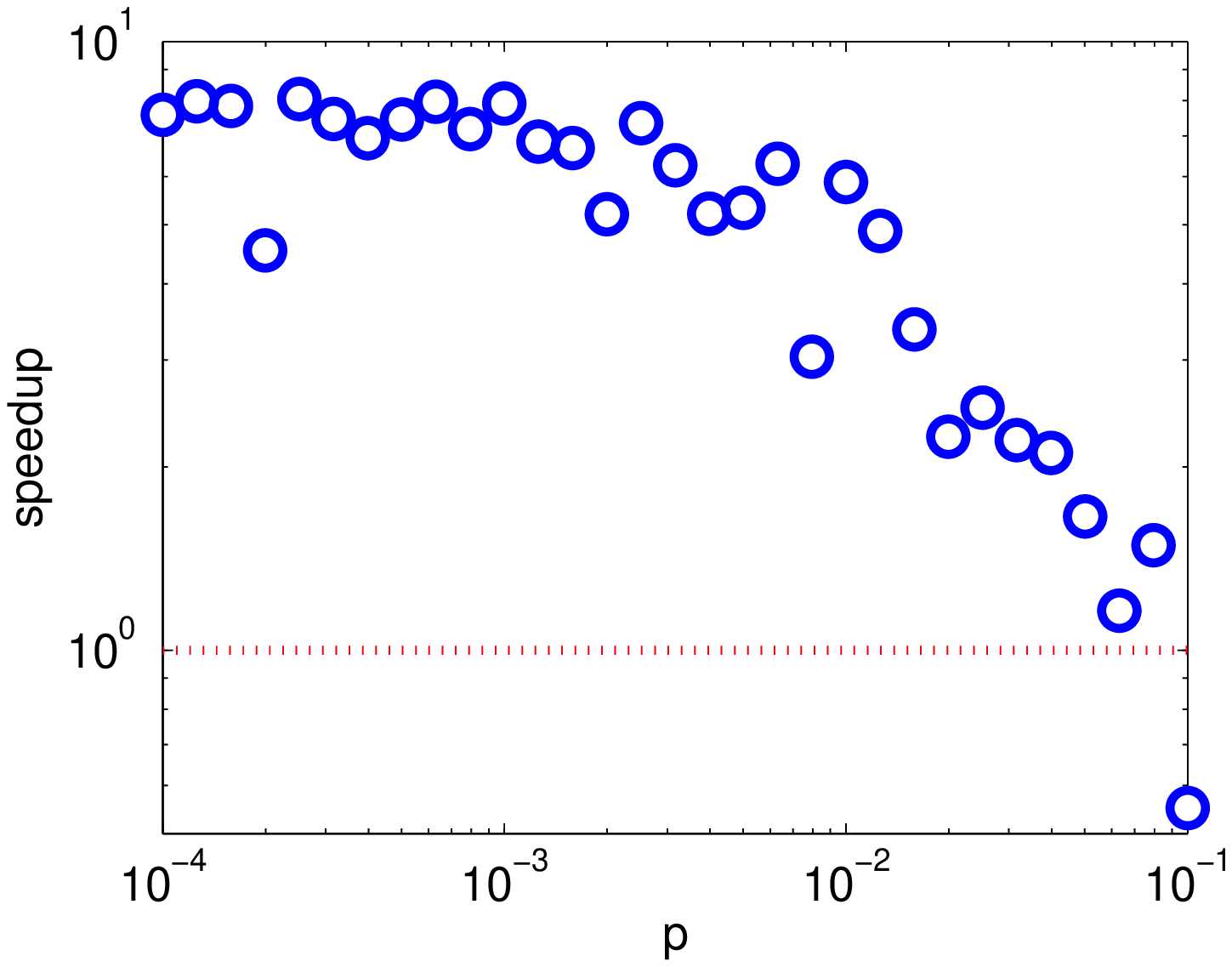}
\end{tabular}
\caption{\textit{Left:} Alignment $u^Tv$ between the true and the normalized average of 1000 subsampled left eigenvectors (blue circles), median value (solid black line), dotted lines at plus and minus one standard deviation and proportion of samples satisfying condition (\ref{eq:pert-condition}) (dashed red line), for various values of the sampling probability $p$ on a term document matrix with dimensions $6779\times11171$. \textit{Right:} Speedup in computing leading eigenvectors on gene expression data, for various values of the sampling probability $p$.
\label{fig:phase-transition-svd}}
\end{center}
\end{figure}

A key difference between the experiments of Figure \ref{fig:phase-transition} and those of \ref{fig:phase-transition-svd} is that the leading eigenvector of the gene expression data set is much more incoherent than the leading left eigenvector of the term-document matrix, which explains part of the difference in performance. We compare both eigenvectors in Figure~\ref{fig:eigenvectors}.

\begin{figure}[ht]
\begin{center}
\begin{tabular}{cc}
\psfrag{ii}[t][b]{$i$}
\psfrag{uu}[b][t]{$|u_i|$}
\includegraphics[width=0.49 \textwidth]{./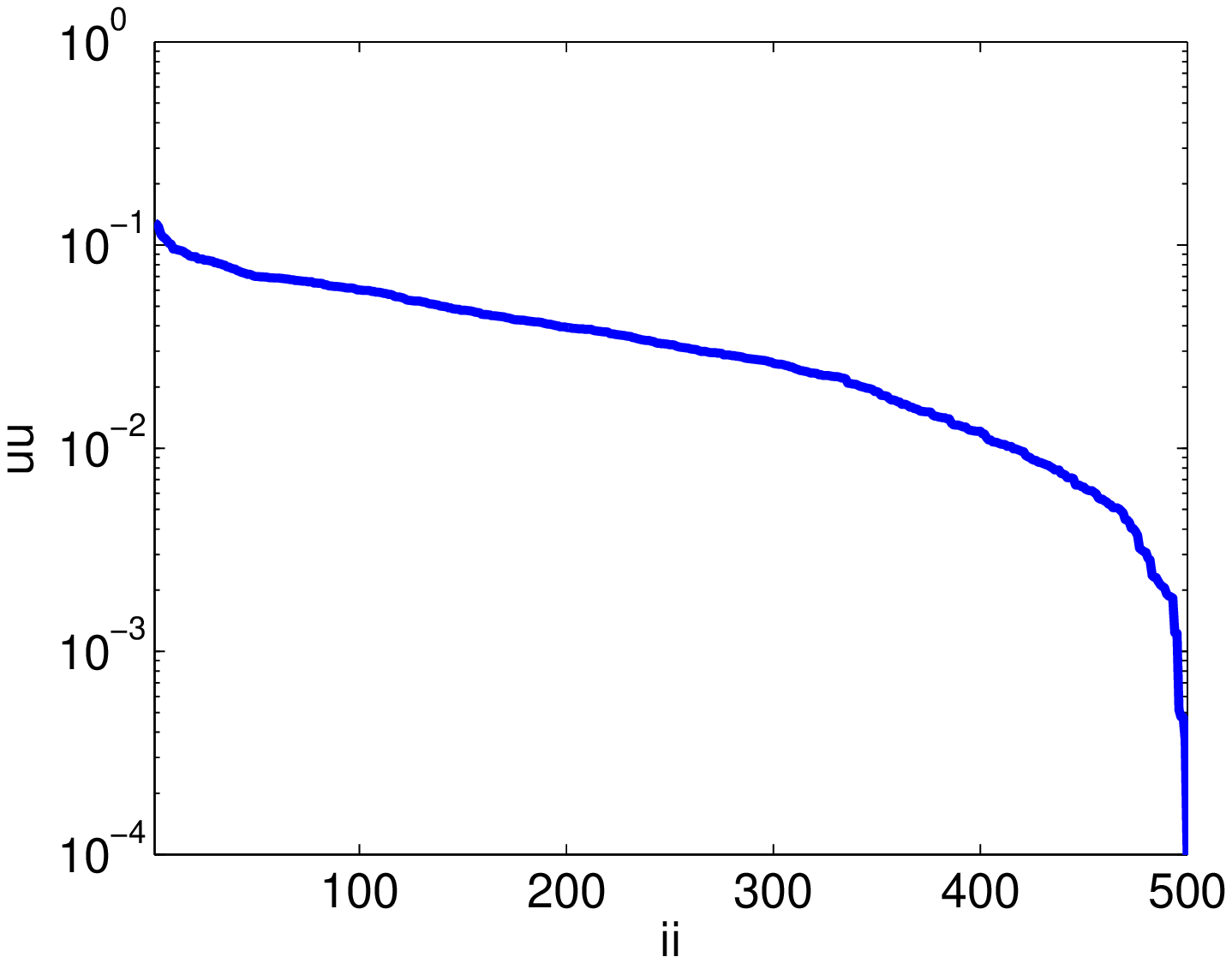}&
\psfrag{ii}[t][b]{$i$}
\psfrag{uu}[b][t]{$|u_i|$}
\includegraphics[width=0.49\textwidth]{./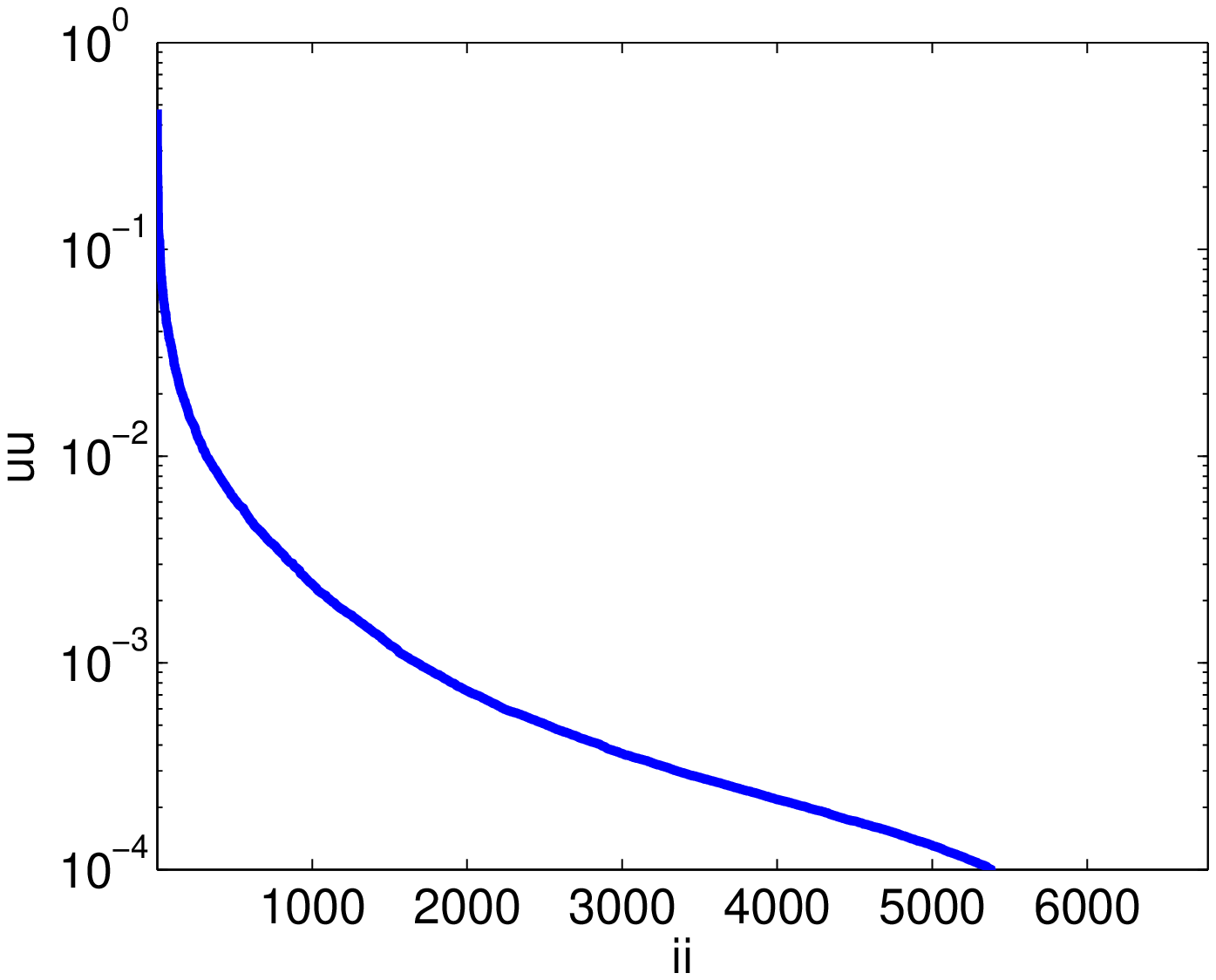}
\end{tabular}
\caption{Magnitude of eigenvector coefficients $|u_i|$ in decreasing order for both the leading eigenvector of the gene expression covariance matrix (left) and leading left eigenvector of the $6779\times11171$ term document matrix (right).
\label{fig:eigenvectors}}
\end{center}
\end{figure}

We then study the impact of the number of samples on precision. We use again the colon cancer data set in \cite{Alon99}. In Figure~\ref{fig:sensib} on the left, we fix the sampling rate at $p=10^{-2}$ and plot $u^Tv$ as a function of the number of samples used in averaging. We also measure the impact of the eigenvalue gap $\lambda_2/\lambda_1$ on precision. We scale the spectrum of the gene expression covariance matrix so that its first eigenvalue is $\lambda_1=1$ and plot the alignment $u^Tv$ between the true and the normalized average of 100 subsampled eigenvectors over subsampling probabilities $p\in[10^{-2},1]$ for various values of the spectral gap $\lambda_2/\lambda_1\in\{0.75,0.95,0.99\}$.

\begin{figure}[ht]
\begin{center}
\begin{tabular}{cc}
\psfrag{p}[t][b]{$p$}
\psfrag{cos}[b][t]{$u^Tv$}
\includegraphics[width=0.49 \textwidth]{./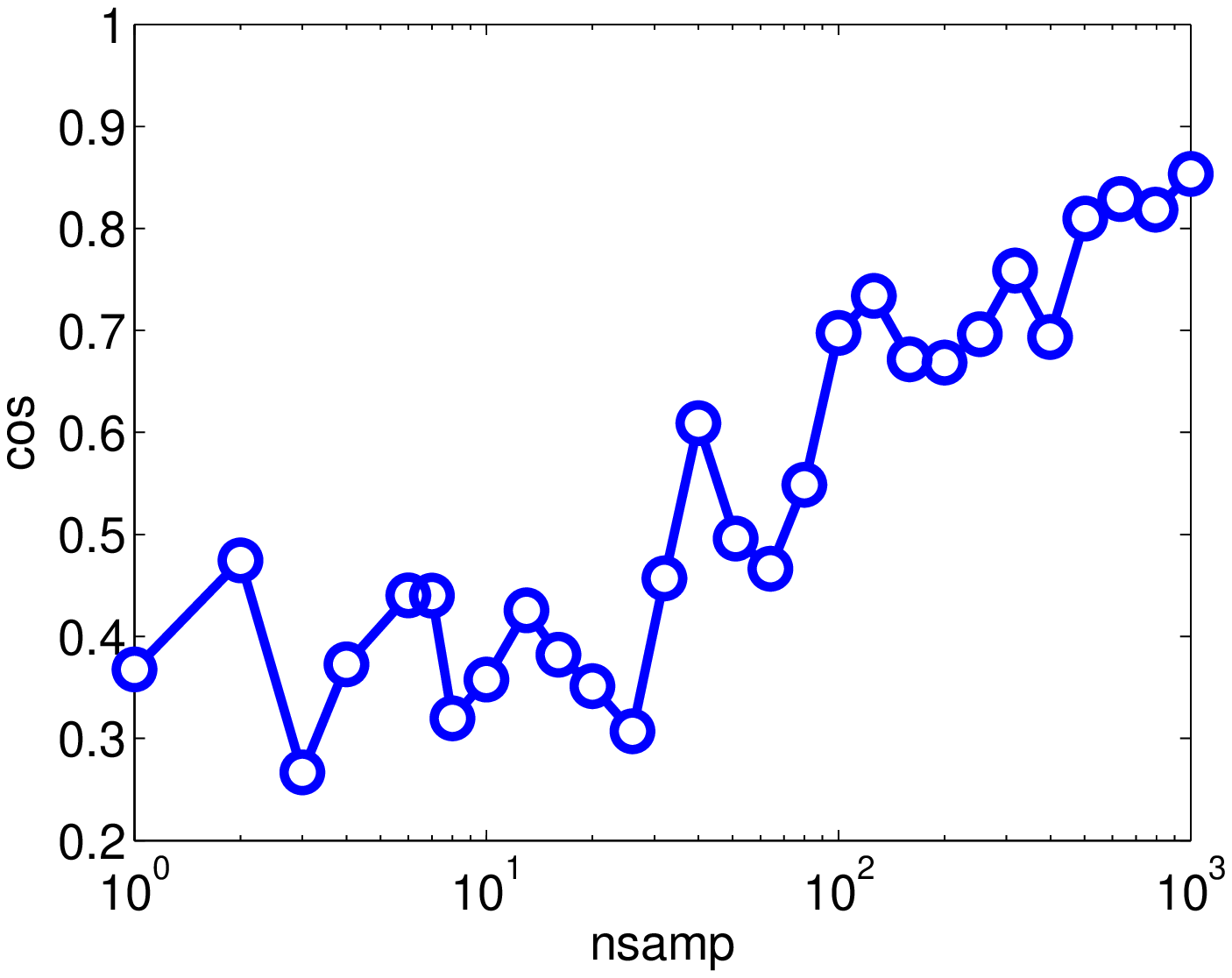}&
\psfrag{p}[t][b]{$p$}
\psfrag{cos}[b][t]{$u^Tv$}
\includegraphics[width=0.49\textwidth]{./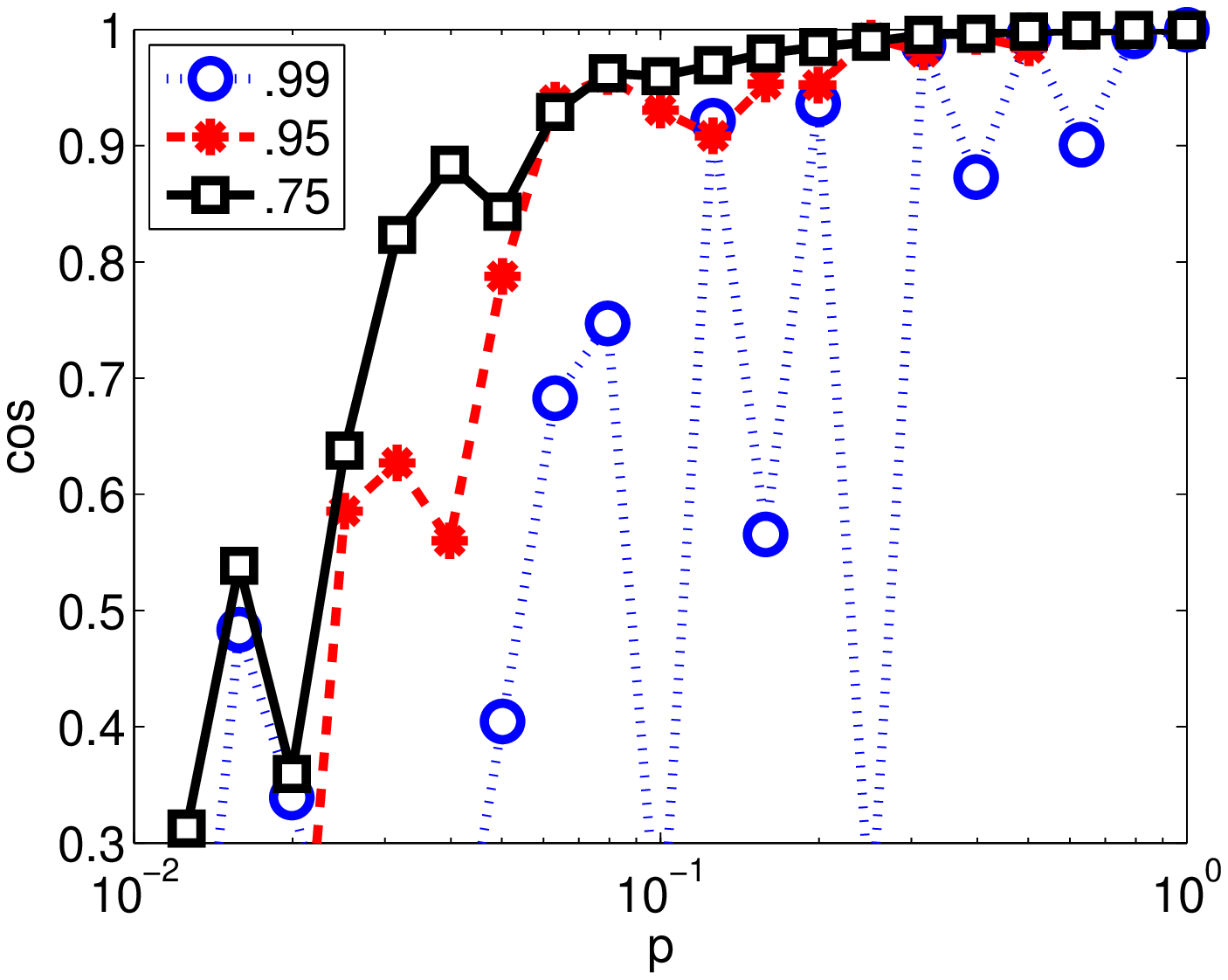}
\end{tabular}
\caption{\textit{Left:} Alignment $u^Tv$ between the true leading eigenvector $u$ and the normalized average leading eigenvector versus number of samples, on the gene expression covariance matrix with subsampling probability $p=10^{-2}$. \textit{Right:} Alignment~$u^Tv$ for various values of the spectral gap $\lambda_2/\lambda_1\in\{0.75,0.95,0.99\}$.
\label{fig:sensib}}
\end{center}
\end{figure}

\begin{figure}[ht]
\begin{center}
\begin{tabular}{cc}
\includegraphics[width=0.50 \textwidth]{./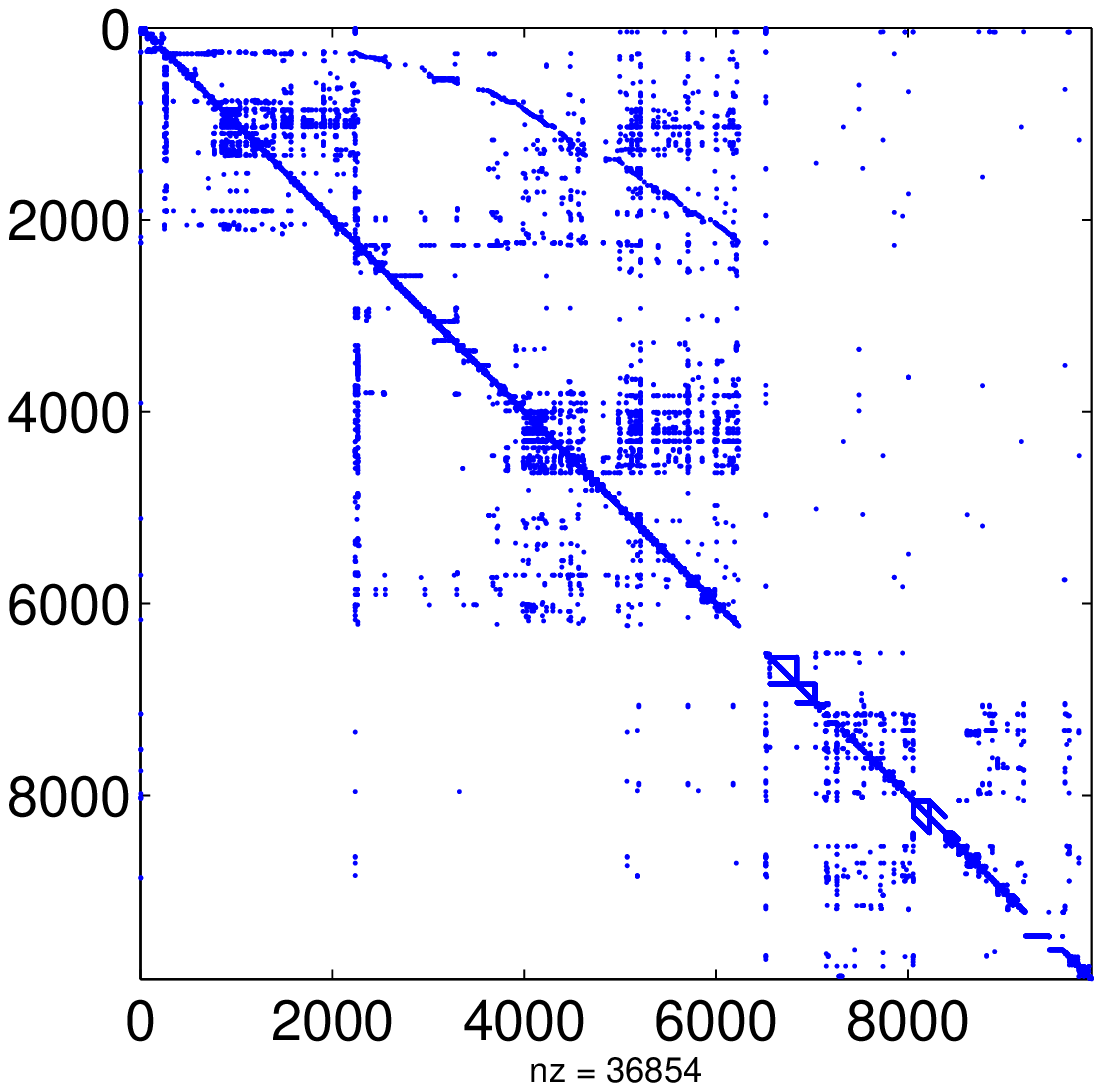}&
\psfrag{ii}[t][b]{$i$}
\psfrag{uu}[b][t]{$|u_i|$}
\includegraphics[width=0.49\textwidth]{./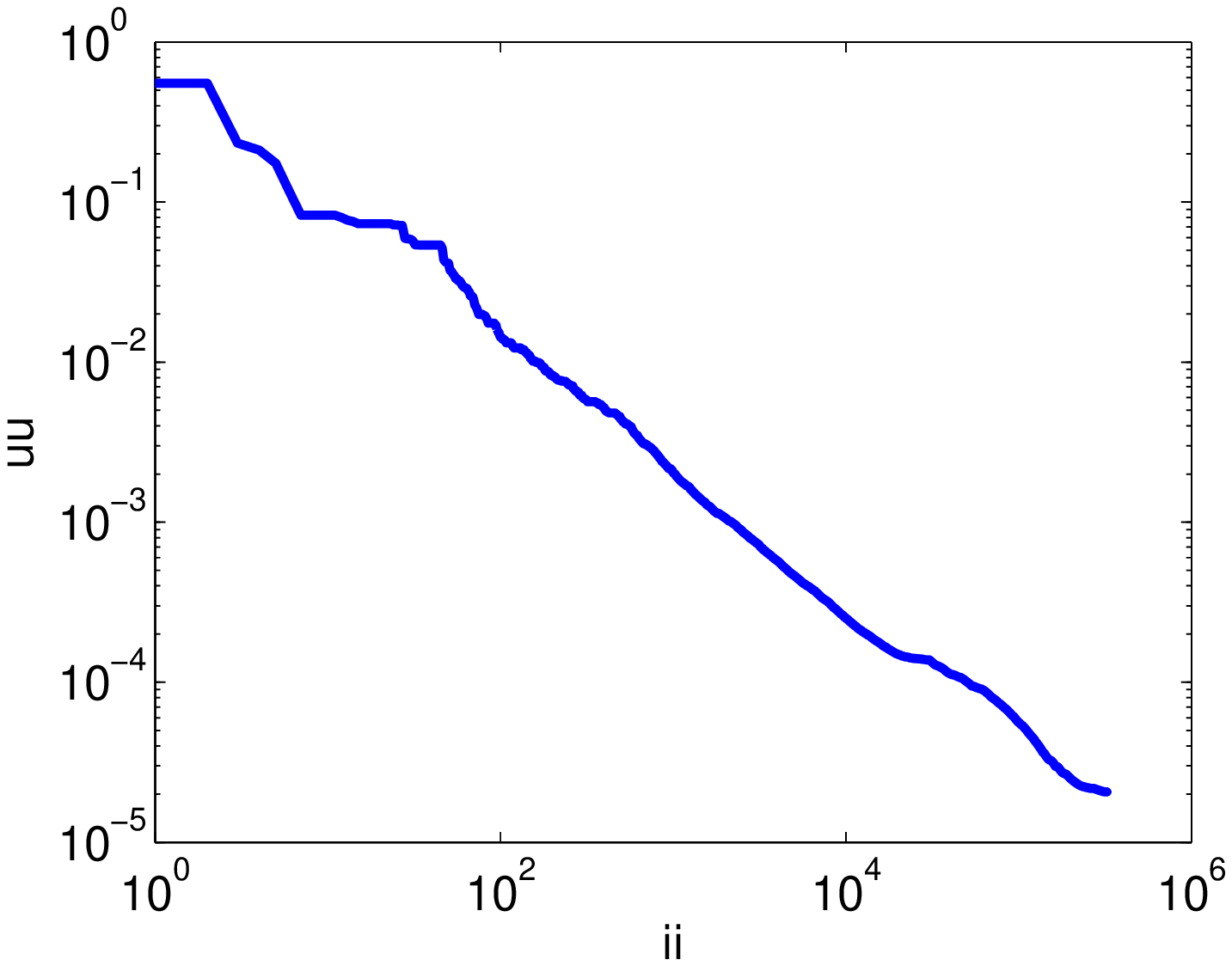}
\end{tabular}
\caption{\textit{Left:} The {\tt wb-cs.stanford} graph. \textit{Right:} Loglog plot of the Pagerank vector coefficients for the {\tt cnr-2000} graph.
\label{fig:web}}
\end{center}
\end{figure}

\begin{figure}[ht]
\begin{center}
\begin{tabular}{cc}
\psfrag{p}[t][b]{$p$}
\psfrag{cos}[b][t]{Spearman's $\rho$}
\includegraphics[width=0.49 \textwidth]{./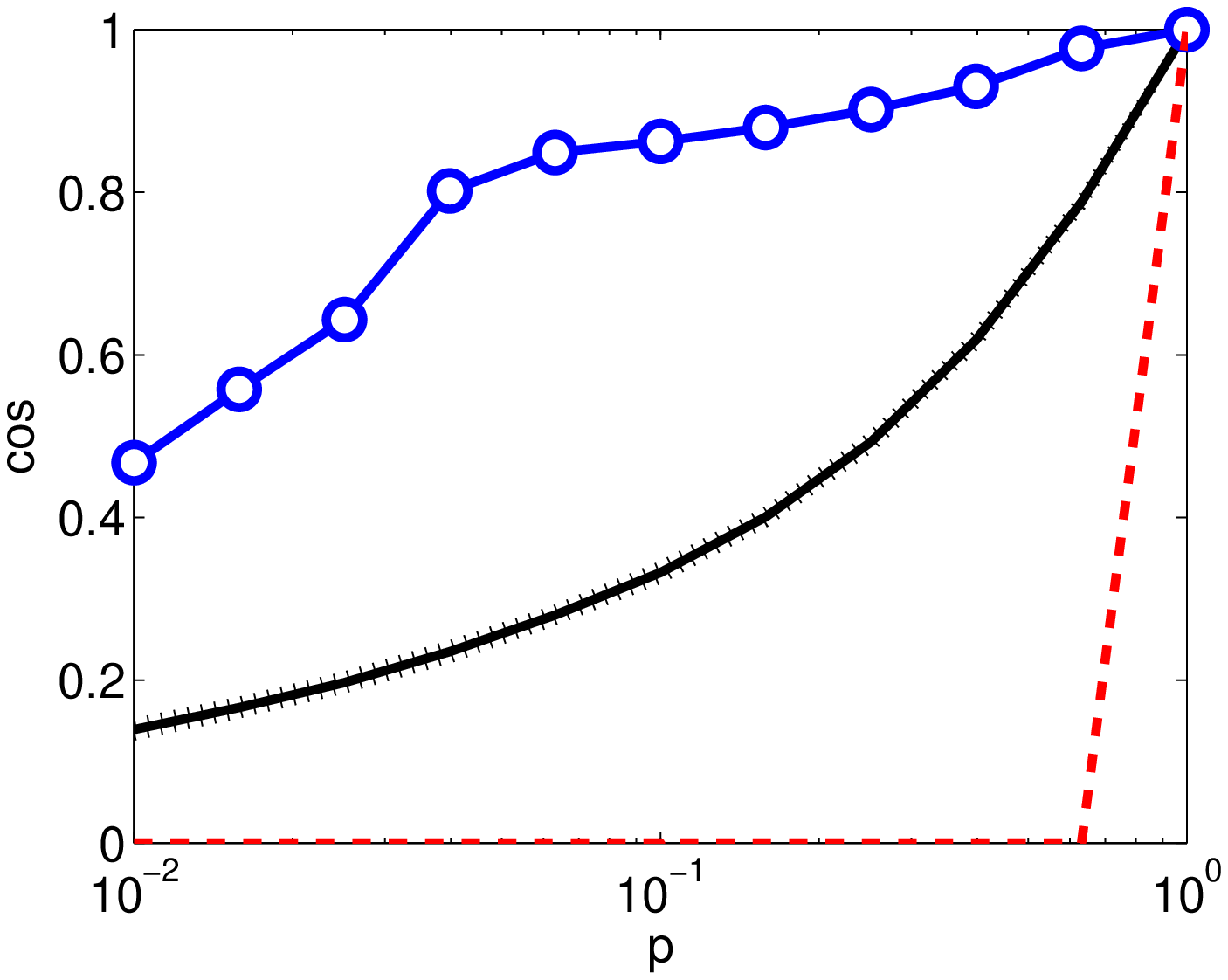}&
\psfrag{p}[t][b]{$p$}
\psfrag{cos}[b][t]{Spearman's $\rho$}
\includegraphics[width=0.49\textwidth]{./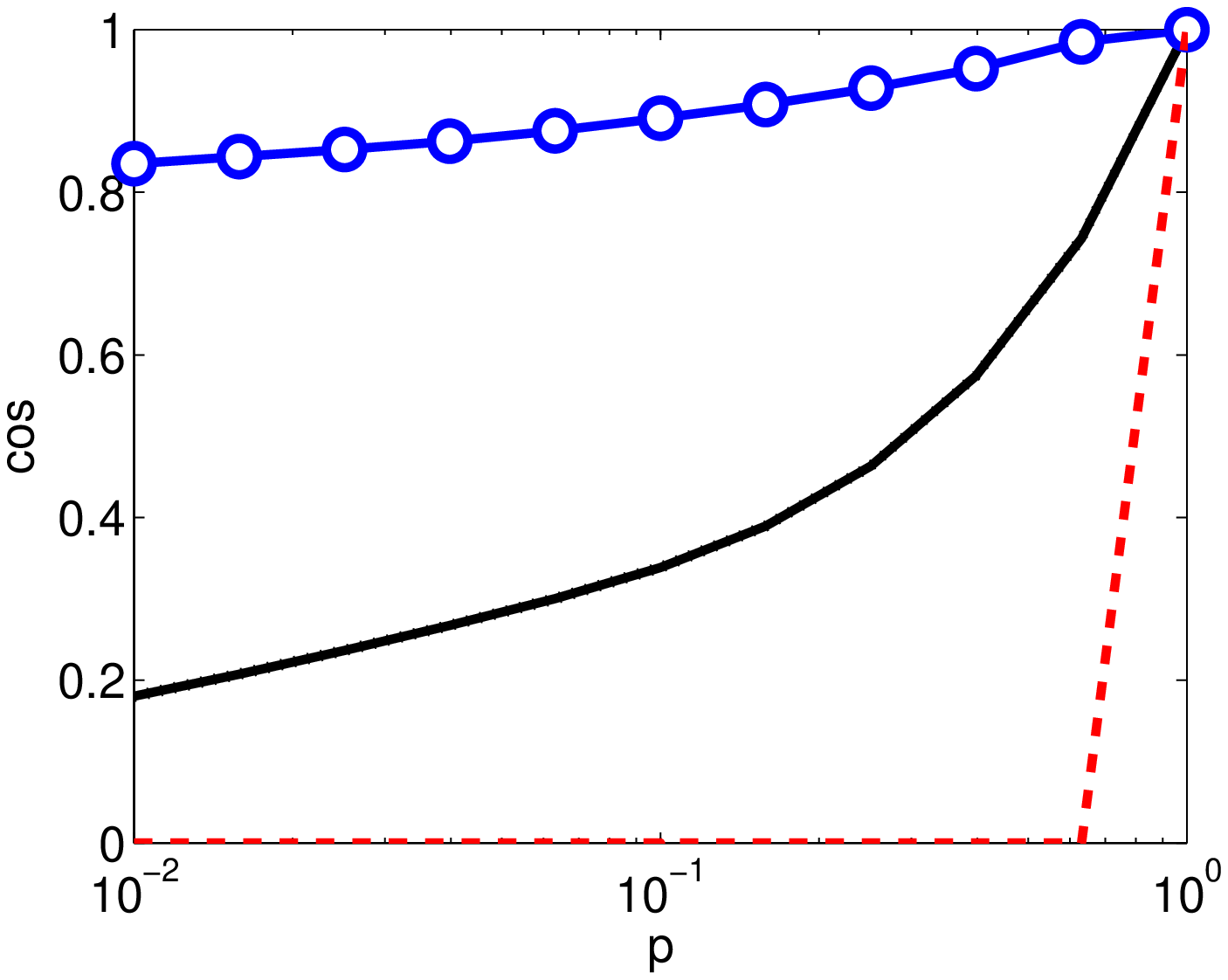}
\end{tabular}
\caption{Ranking correlation (Spearman's $\rho$) between true and averaged pagerank vector (blue circles), median value of the correlation over all subsampled matrices (solid black line), dotted lines at plus and minus one standard deviation and proportion of samples satisfying the perturbation condition (\ref{eq:pert-condition}) (dashed red line), for various values of the sampling probability $p$. \textit{Left:} On the {\tt wb-cs.stanford} graph. \textit{Right:} On the {\tt cnr-2000} graph.
\label{fig:rank-corr}}
\end{center}
\end{figure}

\paragraph{Graph matrices: ranking.}
Here, we test the performance of the methods described above on graph matrices used in ranking algorithms such as pagerank \cite{Page98} (because of its susceptibility to manipulations however, this is only one of many features used by search engines). Suppose we are given the adjacency matrix of a web graph, with
\[\left\{\BA{l}
A_{ij}=1, \quad \mbox{if there is a link from $i$ to $j$}\\
A_{ij}=0, \quad \mbox{otherwise},
\EA\right.\]
where $A\in\reals^{n \times n}$ (one such matrix is displayed in Figure~\ref{fig:web}). Whenever a node has no out-links, we link it with every other node in the graph, so that $B=A+\delta \ones^T/n$, with $\delta_i=1$ if and only if $\mathrm{deg}_i=0$, where $\mathrm{deg}_i$ is the degree of node $i$. We then normalize $B$ into a stochastic matrix $P^g_{ij}={B_{ij}}/{\mathrm{deg}_i}$. The matrix $P^g$ is the transition matrix of a Markov chain on the graph modeling the behavior of a web surfer randomly clicking on links at every page. For most web graphs, this Markov chain is usually not irreducible but if we set
\[
P=cP^g + (1-c) \ones\ones^T/n
\]
for some $c\in(0,1]$, the Markov chain with transition matrix $P$ will be irreducible. An additional benefit of this modification is that the spectral gap of $P$ is at least $c$ \cite{Have03}. The leading (Perron-Frobenius) eigenvector $u$ of this matrix is called the {\em Pagerank} vector \cite{Page98}, its coefficients $u_i$ measure the stationary probability of page $i$ being visited by a random surfer driven by the transition matrix~$P$, hence reflect the importance of page $i$ according to this model.

The coefficients of pagerank vectors typically follow a power law for classic values of the damping factor \cite{Pand06,Becc06} which means that the bounds in assumption 1 do not hold. Empirically however, while the distance between true and averaged eigenvectors quickly gets large, the ranking correlation (measured using Spearman's $\rho$ \cite{Melu07}) is surprisingly robust to subsampling.

We use two graphs from the Webgraph database \cite{Bold04}, {\tt wb-cs.stanford} which has 9914 nodes and 36854 edges, and {\tt cnr-2000} which has 325,557 nodes and 3,216,152 edges. For each graph, we form the transition matrix $P$ as in \cite{Glei04} with uniform teleportation probability and set the teleportation coefficient $c=0.85$. In Figure~\ref{fig:web} we plot the {\tt wb-cs.stanford} graph and the Pagerank vector for {\tt cnr-2000} in loglog scale. In Figure~\ref{fig:rank-corr} we plot the ranking correlation (Spearman's~$\rho$) between true and averaged Pagerank vector (over 1000 samples), the median value of the correlation over all subsampled matrices and the proportion of samples satisfying the perturbation condition~(\ref{eq:pert-condition}), for various values of the sampling probability $p$. We notice that averaging very significantly improves ranking correlation, far outside the perturbation regime.

\section{Conclusion}
We have proposed a method to compute the eigenvectors of very large matrices in a distributed fashion:
\begin{enumerate} \itemsep 0ex
\item To each node in a computer cluster of size $N$, we send a subsampled version $S_i$ of the matrix of interest, $M$.
\item Node $i$ computes the relevant eigenvectors of $S_i$.
\item The $N$ eigenvectors are averaged together and normalized to produce our final estimator.
\end{enumerate}
The key to the algorithm is that Step 2 is numerically cheap (because $S_i$ is very sparse), and hence can be executed fast even on small machines. Therefore a cluster or cloud of small machines could be used to approximate the eigenvectors of $M$, a difficult problem in general when $M$ is extremely large.

We have shown that under carefully stated conditions, the algorithm described above will yield a second-order accurate approximation of the eigenvectors of $M$. This gain in accuracy comes from the averaging step of our algorithm. We note that arguments similar to the ones we used in this paper could be made to compute second-order accurate approximations of the eigenvalues of $M$. (We restricted ourselves to eigenvectors here because in methods such as PCA, the eigenvectors are in some sense more important than the eigenvalues.) Our results depend on a measure of incoherence for $M$ that we propose in this paper. They also show that subsampling will work if the sampling probability is small, but is likely to fail if that probability is too small.

Finally, our simulations show that we gain significantly in accuracy by averaging subsampled eigenvectors (which suggests that our theoretical passage from first-order to second-order accuracy is also relevant in practice) and that the performance of our method seems to degrade for very incoherent matrices, a result that is also in line with our theoretical predictions.

\appendix
\vspace{1cm}
\renewcommand{\thesection}{\Alph{section}}
\renewcommand{\theequation}{\Alph{section}-\arabic{equation}}
\renewcommand{\thetheorem}{\Alph{section}-\arabic{theorem}}
\renewcommand{\thesubsection}{\Alph{section}-\arabic{subsection}}
\renewcommand{\thesubsubsection}{\Alph{section}-\arabic{subsection}.\arabic{subsubsection}}
\setcounter{equation}{0}  
\setcounter{theorem}{0}
\setcounter{section}{0}

\section{Appendix}
\subsection{On $\bm{\|C\|_2}$}\label{ss:AppControlNormCAndE}
Let us consider the symmetric random matrix $C$ with entries distributed as, for $i\geq j$,
\begin{equation}\label{eq:DistEntriesMatrixC}
C_{i,j}=\begin{cases}
\sqrt{\frac{1-p}{p}} &\text{ with probability } p\\
-\sqrt{\frac{p}{1-p}} &\text{ with probability } 1-p
\end{cases}\;.
\end{equation}
We assume that $C$ is $n\times n$. Our aim is to show that we can control $\opnorm{C}$ and in particular its deviation around its median. We do so by using Talagrand's inequality.

We have the following theorem.

\begin{theorem}\label{thm:ControlMaxOpNormsMatrices}

Suppose that we observe $n$ matrices $C_{\alpha_i}$, for $1\leq i\leq n$ with entries distributed as those of the matrix $C$ just described. Suppose these matrices are of size $n^{\alpha_i}$, where $\alpha_i$ are positive numbers. Call $\alpha_{\min}=\min_{1\leq i \leq n} \alpha_i$ and assume that, for some fixed $\delta>0$, $\alpha_{\min}>(\log n)^{(\delta-3)/4}$. Suppose further that $p$ is such that $\lim_{n\tendsto \infty} (\alpha_{\min} \log n)^4/(n^{\alpha_{\min}}p)=0$. Then
\begin{equation}\label{eq:ControlOpNormManyMatrices}
\limsup_{n\tendsto \infty}\opnorm{\frac{C_{\alpha_i}}{n^{\alpha_i/2}}} \leq 2 \text{ a.s }\;.
\end{equation}

\end{theorem}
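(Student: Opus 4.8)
The plan is to combine a sharp bound on the expected operator norm of a bounded-entry random symmetric matrix, due to Vu \cite{Vu07a}, with Talagrand's concentration inequality, which lets me control all $n$ matrices simultaneously and pass to an almost sure statement via Borel--Cantelli. I would first record that the entries of $C$ are independent (for $i\geq j$), mean zero, variance one, and bounded in absolute value by $K=\sqrt{(1-p)/p}$. Fixing an index $i$, the matrix $C_{\alpha_i}$ is $N\times N$ with $N=n^{\alpha_i}$, and Vu's theorem gives, when $K\lesssim N^{1/2}/\log^2 N$,
\[
\Expect\left[\opnorm{C_{\alpha_i}}\right]\leq 2N^{1/2}+\kappa\,K^{1/2}N^{1/4}\log N\;.
\]
The boundedness hypothesis $K\lesssim N^{1/2}/\log^2 N$ is, after squaring, exactly the assumption $(\alpha_i\log n)^4/(pn^{\alpha_i})\tendsto 0$; dividing the error term by $N^{1/2}=n^{\alpha_i/2}$ leaves a remainder of order $\left[(\alpha_i\log n)^4/(pn^{\alpha_i})\right]^{1/4}\tendsto 0$. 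Hence $\Expect\left[\opnorm{C_{\alpha_i}/n^{\alpha_i/2}}\right]\leq 2+\lo{1}$ uniformly in $i$ (the binding case being the index attaining $\alpha_{\min}$), and the same bound holds for a median $m_i$ of $\opnorm{C_{\alpha_i}}$.

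Next I would establish concentration. The map $C\mapsto\opnorm{C}$ is convex and $1$-Lipschitz with respect to the Frobenius norm, hence a convex, Lipschitz function of the independent entries, which take values in intervals of length $L=1/\sqrt{p(1-p)}$. Talagrand's inequality for convex Lipschitz functions of bounded independent variables then yields
\[
P\left(\left|\opnorm{C_{\alpha_i}}-m_i\right|>t\right)\leq 4\exp\left(-\kappa\,p(1-p)\,t^2\right)\;.
\]
Taking $t=s\,n^{\alpha_i/2}$ and using the bound on $m_i$ from the previous step, I get for any fixed $s>0$ and all $n$ large
\[
P\left(\opnorm{C_{\alpha_i}/n^{\alpha_i/2}}>2+s\right)\leq 4\exp\left(-\kappa\,s^2\,p\,n^{\alpha_i}\right)\leq 4\exp\left(-\kappa\,s^2\,p\,n^{\alpha_{\min}}\right)\;.
\]

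Finally I would union-bound over the $n$ matrices and apply Borel--Cantelli. The union bound costs a factor $n$, so I need $\sum_n n\exp(-\kappa s^2 p\,n^{\alpha_{\min}})<\infty$. This is where both remaining hypotheses are used: $(\alpha_{\min}\log n)^4/(pn^{\alpha_{\min}})\tendsto 0$ forces $p\,n^{\alpha_{\min}}\gg(\alpha_{\min}\log n)^4$, while $\alpha_{\min}>(\log n)^{(\delta-3)/4}$ forces $(\alpha_{\min}\log n)^4>(\log n)^{1+\delta}$; together they give $p\,n^{\alpha_{\min}}\gg(\log n)^{1+\delta}\gg\log n$, so the exponent dominates $\log n$ and the series converges. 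Borel--Cantelli then gives that, almost surely, $\max_{1\leq i\leq n}\opnorm{C_{\alpha_i}/n^{\alpha_i/2}}\leq 2+s$ for all large $n$; intersecting over a sequence $s_k\downarrow 0$ yields $\limsup_{n\tendsto\infty}\max_i\opnorm{C_{\alpha_i}/n^{\alpha_i/2}}\leq 2$ almost surely, which is the desired statement (in the uniform-in-$i$ form needed in the proof of Theorem \ref{Thm:ControlOpNormErrorMatrix}).

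I expect the main obstacle to be the calibration forced by a small sampling rate $p$: shrinking $p$ inflates the entries of $C$ to size $1/\sqrt{p}$, which simultaneously threatens the boundedness hypothesis $K\lesssim N^{1/2}/\log^2 N$ that Vu's theorem needs in order to produce the sharp edge constant $2$, and degrades the Talagrand concentration scale to $1/\sqrt{p}$. The delicate point is checking that the single pair of conditions on $(p,\alpha_{\min})$ in the statement is exactly what keeps both effects in check --- extracting the constant $2$ out of Vu and a tail that remains summable after a union bound over $n$ matrices --- rather than forcing two separate, incompatible requirements.
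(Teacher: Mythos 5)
Your proposal is correct and follows essentially the same route as the paper's own proof: Vu's theorem (\cite{Vu07a}) to pin the medians/expectations at $2+\lo{1}$ under the condition $(\alpha_{\min}\log n)^4/(pn^{\alpha_{\min}})\tendsto 0$, Talagrand's inequality for convex Lipschitz functions of the bounded entries to get concentration at scale $p(1-p)n^{\alpha_i}$, and a union bound over the $n$ matrices followed by Borel--Cantelli, with the hypothesis $\alpha_{\min}>(\log n)^{(\delta-3)/4}$ used exactly as in the paper to guarantee $pn^{\alpha_{\min}}\gg(\log n)^{1+\delta}$ and hence summability. The only differences are cosmetic (you bound the tail at level $2+s$ and intersect over $s_k\downarrow 0$, whereas the paper concentrates around the medians with $t=(\log n)^{-\delta/3}$ and bounds the deterministic sequence of medians separately).
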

\begin{proof}
We note that the application $C\tendsto \|C\|_2$ is a convex, $\sqrt{2}$-Lipschitz (with respect to Euclidian/Frobenius norm) function of the entries of $C$ that are on or above the main diagonal. As a matter of fact, since $\|\cdot\|$ is a norm, it is convex. Furthermore, if $A$ and $B$ are two symmetric matrices,
$$
\opnorm{A-B} \leq \Frobnorm{A-B}=\sqrt{\sum_{i,j}(a_{i,j}-b_{i,j})^2}\leq \sqrt{2}\sqrt{\sum_{i\leq j}(a_{i,j}-b_{i,j})^2}
$$
Now recall the consequence of Talagrand's inequality \cite{Talagrand95ConcProd} spelled out in \cite{ledoux2001}, Corollary 4.10 and Equation (4.10): if $F$ is a convex, $1$-Lipschitz function (with respect to Euclidian norm) on $\mathbb{R}^n$, of $n$ independent random variables ($X_1,\ldots,X_n$) that take value in $[u,v]$, and if $m_F$ is a median of $F(X_1,\ldots,X_n)$, then
\begin{equation}\label{eq:TalagrandInequality}
P(|F-m_F|>t)\leq 4\exp(-t^2/[4(u-v)^2])\;.
\end{equation}
The random variables that are above the main diagonal of $C$ are bounded, and take value in $[-\sqrt{\frac{p}{1-p}},\sqrt{\frac{1-p}{p}}]$. We note that
$$
\left(\sqrt{\frac{1-p}{p}}+\sqrt{\frac{p}{1-p}}\right)^2=\frac{1}{p(1-p)}\;.
$$
Therefore, calling $m_{n}$ the median of $\opnorm{n^{-1/2}C}$, we have, in light of Equation \eqref{eq:TalagrandInequality},
\begin{equation}\label{eq:ControlDeviationFromMedianOpnorm}
P\left(\left|\opnorm{\frac{C}{n^{1/2}}}-m_{n}\right|>t\right)\leq 4\exp\left(-\frac{nt^2}{8/(p(1-p))}\right)=4\exp\left(-\frac{t^2}{8}p(1-p)n\right)\;.
\end{equation}


Suppose now that we have a collection $C_{\alpha_i}$ of matrices of size $n^{\alpha_i}$ with entries distributed as in Equation \eqref{eq:DistEntriesMatrixC}. (Note that the matrices could be dependent.) Let us call $m_{n^{\alpha_i}}$ the medians of $\opnorm{C_{\alpha_i}/n^{\alpha_i/2}}$. Then we have, by a simple union bound argument, for any $k$,
$$
P\left(\max_{1\leq i \leq k}  \left|\opnorm{\frac{C_{\alpha_i}}{n^{\alpha_i/2}}}-m_{n^{\alpha_i}}\right|>t\right)\leq 4\sum_{i=1}^k \exp\left(-\frac{t^2}{8}p(1-p)n^{\alpha_i}\right)\leq 4k \exp\left(-\frac{t^2}{8}p(1-p)n^{\alpha_{\min}}\right)\;,
$$
where $\alpha_{\min}=\min_{1\leq i \leq k} \alpha_i$.

Suppose now that $k=n$, $p\leq 1/2$, $pn^{\alpha_{\min}}>(\log n)^{1+\delta}$, and $t\geq (\log n)^{-\delta/3}$ for some $\delta>0$. Then, $t^2 p(1-p)n^{\alpha_{\min}}>(\log n)^{1+\delta/3}/2$, which tends to $\infty$ as $n\tendsto \infty$. Because $u_n=n\exp(-(\log n)^{1+\delta/3}/16)$ is the general term of a converging series, we have, when $p\leq 1/2$ and $pn^{\alpha_{\min}}>(\log n)^{1+\delta}$ for some $\delta>0$,
$$
\max_{1\leq i \leq n}  \left|\opnorm{\frac{C_{\alpha_i}}{n^{\alpha_i/2}}}-m_{n^{\alpha_i}}\right|<(\log n)^{-\delta/3}\text{ a.s }\;,
$$
by a simple application of the Borel-Cantelli lemma. Hence, we have
\begin{equation}\label{eq:ControlOpNormManyMatricesThroughMedians}
\max_{1\leq i \leq n} \opnorm{\frac{C_{\alpha_i}}{n^{\alpha_i/2}}} \leq \max_{1\leq i \leq n} m_{n^{\alpha_i}} + (\log n)^{-\delta/3} \text{ a.s }\;.
\end{equation}
Now all we have to do is control $\max_{1\leq i \leq n} m_{n^{\alpha_i}}$, which is the maximum of a deterministic sequence.

Recall Vu's Theorem 1.4 in \cite{Vu07a}, applied to our situation where we are dealing with bounded random variables with mean 0 and variance 1: if the matrix $C$ has entries as above and is $n\times n$, then almost surely,
$$
\opnorm{\frac{C}{n^{1/2}}}\leq 2 + \kappa_0 \left(\frac{1-p}{p}\right)^{1/4}n^{-1/4}\log(n)\;,
$$
for some constant $\kappa_0$. So as soon as $(\log n)^{4}/(pn)$ remains bounded, so does $m_n$, the median of $\opnorm{\frac{C}{n^{1/2}}}$. In particular, if $(\log n)^{4}/(pn)\tendsto 0$, we have
$$
\limsup_{n\tendsto \infty} m_n\leq 2\;.
$$
Using elementary properties of the function $f$ such that $f(t)=(\log t)^4/t$, we can therefore conclude that if $\alpha_{\min}$ is such that
$$
\frac{\left(\alpha_{\min}\log n\right)^4}{n^{\alpha_{\min}}p}\tendsto 0\;,
$$
we have
$$
\limsup_{n\tendsto \infty}\max_{1\leq i \leq n} m_{n^{\alpha_i}} \leq 2\;.
$$
(Note that this is true because we are taking the maximum of elements of a fixed deterministic sequence that is asymptotically less than or equal to $2+\eps$, for any $\eps$ and the smallest argument is going to infinity. All the work using Talagrand's inequality was done to allow us to switch from having to control the maximum of a random sequence to that of a deterministic sequence.)

Now when $(\alpha_{min}\log n)^{4}/(pn^{\alpha_{\min}})\tendsto 0$, we have a fortiori $p n^{\alpha_{\min}}>(\log n)^{1+\delta}$ when $\alpha_{\min}>(\log n)^{(\delta-3)/4}$. So we conclude that when $(\alpha_{min}\log n)^{4}/(pn^{\alpha_{\min}})\tendsto 0$ and $\alpha_{\min}>(\log n)^{(\delta-3)/4}$,
$$
\limsup_{n\tendsto \infty} \max_{1\leq i \leq n} \opnorm{\frac{C_{\alpha_i}}{n^{\alpha_i/2}}} \leq 2 \text{ a.s }\;.
$$
\end{proof}

Let us now consider the related issue of understanding the matrix $E=r_p M\circ C$, where $r_p=\sqrt{(1-p)/p}$, $M$ is a deterministic matrix and $C$ is a random matrix as above. 

\begin{theorem}\label{thm:ControlOfE}
Suppose $E=r_p M\circ C$, where $C$ is a symmetric random matrix distributed as above, $M$ is a deterministic matrix and $r_p=\sqrt{(1-p)/p}$. Let us call $m_E$ a median of $\opnorm{E}$. Then we have
$$
P\left(|\opnorm{E}-m_E|>t\right)\leq 4 \exp\left(-\frac{p^2}{8\norm{M}_{\infty}^2}t^2\right)\;.
$$
Hence, in particular,
\begin{equation}\label{eq:Control2ndMomentOpNormErrorMatrix}
\Expect\left[\opnorm{E}^2\right]\leq m_E^2 +32\frac{\norm{M}_{\infty}^2}{p^2}+8m_E \sqrt{\frac{2\pi \norm{M}_{\infty}^2}{p^2}}\;.
\end{equation}
and
\begin{equation}\label{eq:Control3ndMomentOpNormErrorMatrix}
\Expect[\opnorm{E}^3]\leq 4 m_E^3+12\sqrt{\pi}\left(\frac{8\norm{M}_{\infty}^2}{p^2}\right)^{3/2}\;.
\end{equation}

\end{theorem}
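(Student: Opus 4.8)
The plan is to prove the concentration inequality by a direct application of Talagrand's inequality \eqref{eq:TalagrandInequality}, exactly as in the proof of Theorem \ref{thm:ControlMaxOpNormsMatrices}, and then to deduce the two moment bounds by integrating the resulting sub-Gaussian tail against the appropriate powers.

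First I would check that, viewed as a function of the free entries $C_{ij}$ ($i\leq j$), the map $C\mapsto\opnorm{E}=\opnorm{r_p\,M\circ C}$ is convex and Lipschitz. Convexity is immediate since $\opnorm{\cdot}$ is a norm and $C\mapsto r_p\,M\circ C$ is linear. For the Lipschitz constant I would estimate, for symmetric $C,C'$,
$$
\left|\opnorm{r_p M\circ C}-\opnorm{r_p M\circ C'}\right|\leq\opnorm{r_p M\circ(C-C')}\leq\Frobnorm{r_p M\circ(C-C')}=r_p\sqrt{\sum_{i,j}M_{ij}^2(C_{ij}-C'_{ij})^2}\,,
$$
then bound $|M_{ij}|\leq\norm{M}_\infty$ and use symmetry to replace the full sum by at most twice the sum over $i\leq j$. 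This shows the map is $L$-Lipschitz with $L=\sqrt{2}\,r_p\norm{M}_\infty$ in the free entries. Applying \eqref{eq:TalagrandInequality} to the rescaled, hence $1$-Lipschitz, function and recalling that the entries of $C$ take values in $[-\sqrt{p/(1-p)},\sqrt{(1-p)/p}]$, so that $(u-v)^2=1/(p(1-p))$, the denominator in the exponent is $4L^2(u-v)^2$. Since $L^2=2r_p^2\norm{M}_\infty^2=2\frac{1-p}{p}\norm{M}_\infty^2$, the factors of $(1-p)$ cancel and leave $4L^2(u-v)^2=8\norm{M}_\infty^2/p^2$, which is exactly the claimed deviation inequality.

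To obtain the moments, I set $X=\opnorm{E}$, $m=m_E$ and $a=p^2/(8\norm{M}_\infty^2)$, so the tail reads $P(|X-m|>t)\leq 4e^{-at^2}$, and I use the layer-cake identity $\Expect[|X-m|^q]=\int_0^\infty q\,t^{q-1}P(|X-m|>t)\,dt$ together with the elementary integrals $\int_0^\infty e^{-at^2}dt=\tfrac12\sqrt{\pi/a}$, $\int_0^\infty t\,e^{-at^2}dt=\tfrac{1}{2a}$ and $\int_0^\infty t^2e^{-at^2}dt=\tfrac{\sqrt\pi}{4}a^{-3/2}$. For the second moment I expand $X^2\leq(m+|X-m|)^2=m^2+2m|X-m|+|X-m|^2$ and take expectations, which yields $\Expect[|X-m|]\leq 2\sqrt{\pi/a}=4\sqrt{2\pi}\,\norm{M}_\infty/p$ and $\Expect[|X-m|^2]\leq 4/a=32\norm{M}_\infty^2/p^2$; these combine into \eqref{eq:Control2ndMomentOpNormErrorMatrix}. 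For the third moment I would use $(\alpha+\beta)^3\leq 4(\alpha^3+\beta^3)$ for $\alpha,\beta\geq 0$ to get $X^3\leq 4(m^3+|X-m|^3)$, and then $\Expect[|X-m|^3]\leq 12\int_0^\infty t^2e^{-at^2}dt=3\sqrt{\pi}\,a^{-3/2}=3\sqrt{\pi}(8\norm{M}_\infty^2/p^2)^{3/2}$, giving \eqref{eq:Control3ndMomentOpNormErrorMatrix}.

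Overall this argument is mechanical once the concentration inequality is in place; the only point requiring genuine care is the bookkeeping in the Lipschitz step, where the symmetry factor $\sqrt2$, the weight $r_p$ and the entry range must combine so that all the $(1-p)$ terms cancel and produce the clean constant $8\norm{M}_\infty^2/p^2$. Splitting the moments around the median $m_E$, rather than integrating $P(X>t)$ directly, is what keeps the constants transparent and matches the exact form stated in \eqref{eq:Control2ndMomentOpNormErrorMatrix} and \eqref{eq:Control3ndMomentOpNormErrorMatrix}.
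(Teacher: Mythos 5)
Your proposal is correct and follows essentially the same route as the paper: the identical convexity and $\sqrt{2}\,r_p\norm{M}_{\infty}$-Lipschitz computation feeding Talagrand's inequality \eqref{eq:TalagrandInequality} with the entry range giving $(u-v)^2=1/(p(1-p))$, followed by moment bounds obtained from the sub-Gaussian tail around the median. The only cosmetic difference is that the paper invokes Propositions 1.9 and 1.10 of \cite{ledoux2001} (plus the Gamma function) for the tail-to-moment step, whereas you carry out the layer-cake integrals explicitly; the resulting constants agree in both cases.
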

\begin{proof}
The crux of the proof is quite similar to that of Theorem \ref{thm:ControlMaxOpNormsMatrices}: we will rely on Talagrand's concentration inequality for convex 1-Lipschitz functions of bounded random variables. To do so let us consider the map: $C\tendsto f(C)=\opnorm{M\circ C}$. This map $f$ is convex as the composition of a norm with an affine mapping. Let us now show that it is $(\sqrt{2}\norm{M}_{\infty})$-Lipschitz with respect to Euclidian norm: if we denote by $c^{(k)}_{i,j}$ the $(i,j)$-th entry of the matrix $C_k$, we have
\begin{align*}
\left|f(C_1)-f(C_2)\right|&=\left|\opnorm{M\circ C_1}-\opnorm{M\circ C_2}\right|\leq \opnorm{M\circ (C_1-C_2)}\\
&\leq \Frobnorm{M\circ (C_1-C_2)}=\sqrt{\sum_{i,j} M_{i,j}^2 (c^{(1)}_{i,j}-c^{(2)}_{i,j})^2}\\
&\leq \max_{i,j}|M_{i,j}| \sqrt{\sum_{i,j} (c^{(1)}_{i,j}-c^{(2)}_{i,j})^2}\leq \norm{M}_{\infty} \sqrt{2} \sqrt{\sum_{i\leq j} (c^{(1)}_{i,j}-c^{(2)}_{i,j})^2}
\end{align*}
Hence, $f$ is indeed a $(\sqrt{2}\norm{M}_{\infty})$-Lipschitz function of the entries of $C$ that are above or on the diagonal.
Now the function of $C$ we care about is $g(\cdot)=r_p f(\cdot)$, which is convex and $\sqrt{2}\norm{M}_{\infty}r_p$- Lipschitz.  Given that the entries of $C$ are bounded, we have, as in the proof of Theorem \ref{thm:ControlMaxOpNormsMatrices},
$$
P(|\opnorm{E}-m_E|>t)\leq 4 \exp\left(-\frac{p(1-p)}{8r_p^2\norm{M}_{\infty}^2}t^2\right)=4 \exp\left(-\frac{p^2}{8\norm{M}_{\infty}^2}t^2\right)\;.
$$
Now using the proof of Proposition 1.9 in \cite{ledoux2001} (see p.12 of this book), we conclude that
\begin{align*}
\Expect\left[|\opnorm{E}-m_E|\right]\leq 4 \sqrt{\frac{2\pi \norm{M}_{\infty}^2}{p^2}}\;, \text{ and }\\
\Expect\left[|\opnorm{E}-m_E|^2\right]\leq 32 \frac{\norm{M}_{\infty}^2}{p^2}\;.
\end{align*}
Therefore,
$$
\Expect\left[\opnorm{E}^2\right]\leq m_E^2 +32\frac{\norm{M}_{\infty}^2}{p^2}+8m_E \sqrt{\frac{2\pi \norm{M}_{\infty}^2}{p^2}}\;,
$$
since for $a$ and $b$ positive, $a^2\leq b^2+(a-b)^2+2b|a-b|$.

More generally, we see, using essentially Proposition 1.10 in \cite{ledoux2001} and elementary properties of the Gamma function,
that if the random variable $F$ is such that for a deterministic number $a_F$, $P(|F-a_F|>t)\leq C \exp(-cr^2)$, then
$$
\Expect[|F-a_F|^k]\leq C \Gamma(\frac{k}{2}+1)c^{-k/2}\;.
$$
Applying this result with $k=3$, we get
$$
\Expect\left[|\opnorm{E}-m_E|^3\right]\leq 3\sqrt{\pi}\left(\frac{8\norm{M}_{\infty}^2}{p^2}\right)^{3/2}\;.
$$
In our context, using the fact that, for positive $a$ and $b$, $(a+b)^3\leq 4(a^3+b^3)$ by convexity, we also have
$$
\Expect[\opnorm{E}^3]\leq 4 \left(m_E^3+3\sqrt{\pi}\left(\frac{8\norm{M}_{\infty}^2}{p^2}\right)^{3/2}\right)\;.
$$
\end{proof}

\subsection{Regularized eigenvector considerations}
We now have the following (regularized) second order accuracy result, which is a critical component of the proof of Theorem \ref{thm:SecondOrderAccuracyAveragingProc}, one of the main results of the paper.
\begin{theorem}\label{thm:SecondOrderAccuracyRegularizedAveragingProc}
Suppose that the assumptions of Theorem \ref{Thm:ControlOpNormErrorMatrix} are satisfied. We consider the approximation of $u$ the eigenvector associated with the largest eigenvalue of $M$. Recall that $v$ is the eigenvector corresponding to the leading eigenvalue of the subsampled matrix $S$. For $\eps>0$, we call $\tilde{v}_{\eps}$ the vector such that
$$
\tilde{v}_{\eps}=
\begin{cases}
v \text{ if } \opnorm{(\id+\Delta)^{-1}}\leq \frac{1}{\eps}\\
u-REu+\Delta RE u \text{ otherwise}
\end{cases}\;.
$$
Then, for any $\eta>0$, we have asymptotically,
$$
\norm{\Expect[u-\tilde{v}_{\eps}]}_2\leq \frac{8+\eta}{(\lambda_1-\lambda_2)^2} \frac{\mu^2}{pn^{\alpha_{\min}}}+\frac{16+\eta}{\eps(\lambda_1-\lambda_2)^3} \left(\frac{\mu^2}{pn^{\alpha_{\min}}}\right)^{3/2}\;.
$$
Suppose further that we are in an asymptotic setting where $\frac{1}{\lambda_1-\lambda_2}\frac{\mu}{(pn^{\alpha_{\min}})^{1/2}}\tendsto 0$. Then, $v-\tilde{v}_{\eps}=0$ with high-probability.
\end{theorem}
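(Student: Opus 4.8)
The plan is to specialize the exact Kato expansion of Theorem~\ref{thm:ApproxEigenvectorsOrderk+2Accurate} to order $j=1$, take expectations, and exploit the fact that the first-order term vanishes in expectation; this cancellation is exactly what upgrades first-order to second-order accuracy. Recall $\Delta=R(E-\gamma\,\id)$ with $\gamma=\lambda_1(S)-\lambda_1$ and $Ru=0$, so $REu=\Delta u$. Setting $j=1$ in \eqref{eq:ExactKthOrderExpansionOfEigenvector} gives, on the event where $\id+\Delta$ is invertible, $v=u-REu+\Delta REu-\Delta^3(\id+\Delta)^{-1}u$. Since the second branch of $\vTildeEps$ is precisely $u-REu+\Delta REu$, writing $G=\{\opnorm{(\id+\Delta)^{-1}}\le 1/\eps\}$ yields the single identity
$$
u-\vTildeEps=REu-\Delta REu+\Delta^3(\id+\Delta)^{-1}u\,1_G .
$$

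Next I would take expectations. The crucial point is $\Expect[REu]=R\,\Expect[E]\,u=0$, because $\Expect[E]=0$ (the entries of $C$ are centered) while $R$ and $u$ are deterministic. Hence $\Expect[u-\vTildeEps]=-\Expect[\Delta REu]+\Expect[\Delta^3(\id+\Delta)^{-1}u\,1_G]$, and by the triangle inequality together with Jensen's inequality, $\norm{\Expect[u-\vTildeEps]}_2\le \Expect[\opnorm{\Delta REu}]+\Expect[\opnorm{\Delta^3(\id+\Delta)^{-1}u}\,1_G]$. Both terms will then be bounded by moments of $\opnorm{E}$: using $\opnorm{R}=1/d$ with $d=\lambda_1-\lambda_2$, Weyl's inequality $|\gamma|\le\opnorm{E}$, the crude bound $\opnorm{\Delta}\le 2\opnorm{E}/d$ and the sharper $\opnorm{REu}=\opnorm{\Delta u}\le\opnorm{E}/d$, one checks $\Delta REu=\Delta^2u$ with $\opnorm{\Delta^2u}\le 2\opnorm{E}^2/d^2$, so the second-order term is at most $\tfrac{2}{d^2}\Expect[\opnorm{E}^2]$. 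On $G$ we have $\opnorm{(\id+\Delta)^{-1}}\le 1/\eps$, so the remainder is at most $\tfrac{1}{\eps}\opnorm{\Delta}^3\le \tfrac{8}{\eps d^3}\opnorm{E}^3$, giving a bound of order $\tfrac{1}{\eps d^3}\Expect[\opnorm{E}^3]$.

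It then remains to substitute the second- and third-moment bounds \eqref{eq:Control2ndMomentOpNormErrorMatrix}--\eqref{eq:Control3ndMomentOpNormErrorMatrix} of Theorem~\ref{thm:ControlOfE}. Writing $\xi=\mu/(pn^{\alpha_{\min}})^{1/2}$, I would use $m_E\le(2+o(1))\xi$ (which follows from $\limsup_n\opnorm{E}\le 2\xi$ a.s. of Theorem~\ref{Thm:ControlOpNormErrorMatrix} and concentration) and $\norm{M}_\infty^2/p^2=o(\xi^2)$ (a consequence of $\norm{M}_\infty\le\mu/n^{\alpha_{\min}}$ and $pn^{\alpha_{\min}}\tendsto\infty$), so that $\Expect[\opnorm{E}^2]\le(4+\eta)\xi^2$ and $\Expect[\opnorm{E}^3]=O(\xi^3)$; grouping the remainder slightly more finely than the crude estimate above and absorbing all lower-order contributions into the slack $\eta$ then produces the announced $\tfrac{8+\eta}{d^2}\xi^2$ and $\tfrac{16+\eta}{\eps d^3}\xi^3$. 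For the final high-probability coincidence when $\xi/d\tendsto 0$: on $G$ the vectors agree by construction, and exactly as in the proof of Theorem~\ref{thm:ApproxEigenvectorsOrderk+2Accurate}, $2\opnorm{E}/d<1-\eps$ forces $\opnorm{(\id+\Delta)^{-1}}\le(1-2\opnorm{E}/d)^{-1}\le 1/\eps$; thus $P(G^c)\le P(\opnorm{E}>(1-\eps)d/2)$, and since $m_E=O(\xi)$ with $\xi/d\tendsto0$ the gap $(1-\eps)d/2-m_E\gtrsim d/2$ dominates $\norm{M}_\infty/p$, so the deviation inequality of Theorem~\ref{thm:ControlOfE} drives $P(G^c)\tendsto0$.

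I expect the main obstacle to be controlling the remainder term in expectation, which is the very reason the regularization is introduced. Without it, $\Expect[\opnorm{(\id+\Delta)^{-1}}]$ --- hence $\Expect[\norm{v-u}]$ --- need not be finite, since on the rare event where $\opnorm{E}$ is large, $\id+\Delta$ is near-singular and $v$ can be arbitrarily large. Replacing $v$ by $\vTildeEps$ caps $\opnorm{(\id+\Delta)^{-1}}$ at $1/\eps$ on $G$ and substitutes the bounded degree-two polynomial $u-REu+\Delta REu$ off $G$, so that every expectation reduces to a finite moment of $\opnorm{E}$ that Theorem~\ref{thm:ControlOfE} controls; the rest is careful bookkeeping of constants, for which $\limsup_n\opnorm{E}\le 2\xi$ and $\norm{M}_\infty^2/p^2=o(\xi^2)$ are the decisive inputs.
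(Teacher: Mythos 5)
Your proposal follows essentially the same route as the paper's own proof: the exact $j=1$ expansion \eqref{eq:ExactKthOrderExpansionOfEigenvector} combined with the $\eps$-regularization, cancellation of $\Expect[REu]$ because $\Expect[E]=0$ with $R$, $u$ deterministic, the bounds $\norm{\Delta REu}_2\leq 2\opnorm{E}^2/d^2$ and $\opnorm{(\id+\Delta)^{-1}}\leq 1/\eps$ on the good event, then the moment and concentration estimates of Theorem \ref{thm:ControlOfE} together with $m_E\leq (2+\lo{1})\xi$ and $\norm{M}_{\infty}^2/p^2=\lo{\xi^2}$, and finally the same reduction of $P(v\neq\vTildeEps)$ to a deviation of $\opnorm{E}$. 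The only differences are cosmetic: you bound the remainder by $\opnorm{\Delta}^3/\eps\leq 8\opnorm{E}^3/(\eps d^3)$ where the paper uses $\opnorm{\Delta}^2\opnorm{RE}/\eps\leq 4\opnorm{E}^3/(\eps d^3)$ (a factor-2 slack), and your closing remark about ``careful bookkeeping of constants'' is apt, since the stated constants $8+\eta$ and $16+\eta$ require exactly the substitutions you list (and the $16+\eta$ is, if anything, generous even under the paper's own accounting).
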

\begin{proof}
Let us first show that our regularization does not change the vector we are dealing with with high-probability.
$\tilde{v}_{\eps}=v$ as long as $\opnorm{(\id+\Delta)^{-1}}\leq 1/\eps$, which is guaranteed if $2\opnorm{E}/d\leq 1-\eps$. Since we assume that $\frac{1}{\lambda_1-\lambda_2}\frac{\mu}{(pn^{\alpha_{\min}})^{1/2}}\tendsto 0$ and we have according to Theorem \ref{thm:ControlOfE}
$\opnorm{E} \leq 2 \frac{\mu}{(pn^{\alpha_{\min}})^{1/2}}$ with high-probability, we conclude that with high-probability, $\tilde{v}_{\eps}=v$.

Using Equation \eqref{eq:ExactKthOrderExpansionOfEigenvector} with $j=1$, we see that, since $\opnorm{\Delta}\leq 2 \opnorm{R}\opnorm{E}$,
$$
\norm{\tilde{v}_{\eps}-(u-REu+\Delta RE u)}_2 \leq \frac{1}{\eps} \opnorm{\Delta}^2 \opnorm{RE}\leq \frac{4\opnorm{R}^3\opnorm{E}^3}{\eps}  \;.
$$
Recall that by construction $\Expect[E]=0$. Hence, since $R$ is a fixed deterministic matrix and $u$ is a deterministic vector,
$$
\Expect\left[\tilde{v}_{\eps}-u\right]=\Expect\left[\tilde{v}_{\eps}-u+REu\right]\;.
$$
So, if we now use the fact that $\norm{u}=1$, we have
\begin{align*}
\norm{\Expect\left[\tilde{v}_{\eps}-u\right]}_2&=\norm{\Expect\left[\tilde{v}_{\eps}-u+REu\right]}_2\\
&\leq \norm{\Expect\left[\tilde{v}_{\eps}-u+REu-\Delta REu\right]}_2+\norm{\Expect\left[\Delta REu\right]}_2\\
&\leq  \Expect\left[\norm{\tilde{v}_{\eps}-u+REu-\Delta REu}_2\right]+\Expect\left[\norm{\Delta REu}_2\right]\\
&\leq \Expect\left[\frac{4\opnorm{R}^3\opnorm{E}^3}{\eps}+2\opnorm{R}^2\opnorm{E}^2\right]\;.
\end{align*}


Let us now show that we can control the right-hand side of the previous equation.

We prove in Theorem \ref{thm:ControlOfE} that
$$
\Expect\left[\opnorm{E}^2\right]\leq m_E^2 +32\frac{\norm{M}_{\infty}^2}{p^2}+8m_E \sqrt{\frac{2\pi \norm{M}_{\infty}^2}{p^2}}\;,
$$
where $m_E$ is a median of the random variable $\opnorm{E}$. Our asymptotic control of $\opnorm{E}$ in \eqref{eq:norm-bound} gives allows us to control $m_E$, namely,
$$
\limsup_{n\tendsto \infty} m_E^2\leq 4 \frac{\mu^2}{pn^{\alpha_{\min}}}\;.
$$

In other respects, we clearly have $\norm{M}_{\infty}\leq \sum_{i=1}^n \lambda_i \norm{u_i}_{\infty}^2$, and hence
$$
\norm{M}_{\infty}\leq n^{-\alpha_{\min}} \mu\;.
$$
Hence,
$$
\frac{\norm{M}_{\infty}^2}{p^2}\leq  \frac{\mu^2}{\left(pn^{\alpha_{\min}}\right)^2}=o\left(\frac{\mu^2}{pn^{\alpha_{\min}}}\right)\;,
$$
since we are in a setting where $pn^{\alpha_{\min}}\tendsto \infty$. Similarly, $m_E\sqrt{\frac{\norm{M}_{\infty}^2}{p^2}}=o\left(\frac{\mu^2}{pn^{\alpha_{\min}}}\right)$, so we have for $\eta>0$,
$$
2\opnorm{R}^2\Expect\left[\opnorm{E}^2\right]\leq \frac{8+\eta}{(\lambda_1-\lambda_2)^2} \frac{\mu^2}{pn^{\alpha_{\min}}}
$$
asymptotically.

Furthermore, we prove in Theorem \ref{thm:ControlOfE} that
$$
\Expect[\opnorm{E}^3]\leq 4 m_E^3+12\sqrt{\pi}\left(\frac{8\norm{M}_{\infty}^2}{p^2}\right)^{3/2}\leq  4 m_E^3+\lo{\left(\frac{\mu^2}{pn^{\alpha_{\min}}}\right)^{3/2}}\;.
$$
Hence, for $\eta>0$,
$$
4\opnorm{R}^3 \Expect[\opnorm{E}^3] \leq \frac{16+\eta}{(\lambda_1-\lambda_2)^3} \left(\frac{\mu^2}{pn^{\alpha_{\min}}}\right)^{3/2}\;.
$$
\end{proof}

\subsection{On $\bm{\|C\|_2}$ when $\bm{p\ll (\log n)/n}$}\label{AppSubsec:TightnessResult}
At the end of Subsection \ref{ss:sparse-approx}, we mentioned a corollary (see below) of the following theorem:
\begin{theorem}\label{thm:LoseControlOfOpNormCForSmallp}
Suppose that  $p=(\log n)^{1-\delta} u_n/n$, for a fixed $\delta$ in $(0,1)$ and for a fixed $\kappa$, $0< u_n\leq \kappa$. Suppose further that we can find $v_n>0$ such that $v_n\tendsto \infty$, while $v_n= o(\log n,[u_n^{-1}(\log n)^{\delta}]^{1/4})$. Then
$$
\|C/\sqrt{n}\|_2 \tendsto \infty \text{ with probability one. }
$$
\end{theorem}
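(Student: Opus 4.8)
The plan is to exploit the fact that for such a small $p$ the matrix $C$ is an affinely shifted adjacency matrix of a very sparse random graph, whose spectral norm is driven by its maximum degree. Writing $B$ for the symmetric $0/1$ matrix with $B_{ij}=1$ exactly when $C_{ij}=\sqrt{(1-p)/p}$ (so that $B$ has i.i.d.\ $\mathrm{Bernoulli}(p)$ entries on and above the diagonal), an elementary computation gives the exact decomposition
\[
C=\frac{1}{\sqrt{p(1-p)}}\,B-\sqrt{\frac{p}{1-p}}\,\ones\ones^T .
\]
I would then reduce the whole problem to producing a single vertex of large degree in $B$: the spectral norm is bounded below by a nonnegative ``star'' test vector, while the rank-one correction $-\sqrt{p/(1-p)}\,\ones\ones^T$ turns out to be negligible.

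First I would record the star lower bound. Suppose some index $i$ has at least $d$ neighbours in $B$, and let $x$ be the unit vector with $x_i=1/\sqrt2$, $x_j=1/\sqrt{2d}$ on $d$ of these neighbours, and $0$ elsewhere. Since $B$ and $x$ are entrywise nonnegative, keeping only the star edges gives $x^TBx\ge \sqrt d$, whereas $x^T\ones\ones^T x=(\sum_k x_k)^2\le \tfrac12(1+\sqrt d)^2$. Hence, writing $L=np$ for the mean degree,
\[
\opnorm{\frac{C}{\sqrt n}}\ge \frac{x^TCx}{\sqrt n}\ge \frac{\sqrt d}{\sqrt{np(1-p)}}-\frac{\sqrt{p/(1-p)}\,(1+\sqrt d)^2}{2\sqrt n},
\]
where the first term is $(1+\lo{1})\sqrt{d/L}$ and the second is $\gO{\sqrt L\,d/n}\tendsto 0$ as soon as $d=\lo{n/\sqrt L}$. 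Choosing the degree threshold $d_n=\lceil v_n^2 L\rceil$ therefore yields $\opnorm{C/\sqrt n}\ge (1+\lo{1})\,v_n$ provided $B$ has a vertex of degree at least $d_n$; the hypothesis $v_n=\lo{\log n}$ guarantees $d_n=\lo{n/\sqrt L}$ so the $\ones\ones^T$ term is harmless.

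The remaining, and main, task is to show that with probability one $B$ eventually has a vertex of degree at least $d_n$. To sidestep the mild dependence between degrees (coming from $B_{ij}=B_{ji}$), I would split $\{1,\dots,n\}$ into two halves and, for each vertex of the first half, count only its neighbours in the second half; these $\lfloor n/2\rfloor$ counts are \emph{independent} $\mathrm{Bin}(\lceil n/2\rceil,p)$ variables, and a vertex with at least $d_n$ such neighbours certainly has degree at least $d_n$. Letting $q_n=P(\mathrm{Bin}(\lceil n/2\rceil,p)\ge d_n)$, the failure probability is at most $(1-q_n)^{\lfloor n/2\rfloor}\le \exp(-\tfrac12 n\,q_n)$, so by Borel--Cantelli it suffices to make $\sum_n \exp(-\tfrac12 n q_n)<\infty$. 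Bounding $q_n\ge P(\mathrm{Bin}(\lceil n/2\rceil,p)=d_n)$ by Stirling and collecting terms gives
\[
\log\!\big(n\,q_n\big)\ge \log n-d_n\log\!\Big(\frac{2d_n}{L}\Big)-\frac{L}{2}+\gO{\log d_n},
\]
and the calibration of $v_n$ enters exactly here. With $d_n=\lceil v_n^2 L\rceil$ one has $d_n\log(2d_n/L)=(1+\lo{1})\,2v_n^2L\log v_n$; since $u_n^{-1}(\log n)^\delta=\log n/L$, the assumption $v_n=\lo{(u_n^{-1}(\log n)^\delta)^{1/4}}$ reads $v_n=\lo{(\log n/L)^{1/4}}$, whence $v_n^2\log v_n=\lo{\log n/L}$ and therefore $d_n\log(2d_n/L)=\lo{\log n}$ (while $L/2=\lo{\log n}$ as well). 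Hence $n q_n\ge n^{1-\lo{1}}\tendsto\infty$, the failure probabilities are summable, and the almost sure conclusion follows.

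The one genuinely delicate step is this last one: turning the heuristic ``maximum degree $\gg$ mean degree'' into a statement strong enough for an \emph{almost sure} (not merely in-probability) conclusion. This is what forces both the sharp non-asymptotic binomial point-mass estimate and the independence-by-bipartition trick, and it is precisely the interplay of the two constraints on $v_n$ --- $v_n=\lo{\log n}$ to annihilate the rank-one term and $v_n=\lo{(\log n/L)^{1/4}}$ to keep $n q_n\tendsto\infty$ --- that makes Borel--Cantelli applicable. Everything else, namely the decomposition of $C$ and the star bound, is elementary.
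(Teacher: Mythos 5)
Your proof is correct, and it takes a genuinely different route from the paper's, although both rest on the same underlying phenomenon: for $p$ in this regime the maximum degree of the associated Erd\H{o}s--R\'enyi graph exceeds the mean degree $L=np$ by a diverging factor, and this is certified through a binomial point-mass lower bound. The differences are threefold. First, the spectral lower bound: the paper never decomposes $C$ into adjacency plus rank-one; it bounds $\opnorm{C/\sqrt{n}}^2$ from below by the largest diagonal entry of $C\trsp C/n$, via the exact identity $(C\trsp C)_{ii}=\frac{np}{1-p}+d_i\bigl(\frac{1-p}{p}-\frac{p}{1-p}\bigr)$, where $d_i$ is the number of entries of column $i$ equal to $\sqrt{(1-p)/p}$; a single high-degree column then forces the norm up, with no rank-one correction to dispose of, whereas your star vector needs the (easy) extra check that the $\sqrt{p/(1-p)}\,\ones\ones\trsp$ term is harmless. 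Second, the probabilistic input: the paper invokes Bollob\'as's Theorems 3.1 and 1.5 on degree sequences (with threshold $k=np(1+v_n)$ rather than your $\lceil v_n^2L\rceil$) and checks that all correction terms in the exponential are $\lo{\log n}$ --- the very same calibration of $v_n$ against $\log n$ and $[u_n^{-1}(\log n)^{\delta}]^{1/4}$ that you perform by hand with Stirling; your bipartition device, which restores exact independence among the degree counts, replaces that citation and makes the argument self-contained. Third, and most substantively, the strength of the conclusion: the random-graph theorem the paper relies on gives only convergence of probabilities to one, so the paper's proof as written establishes divergence of $\opnorm{C/\sqrt{n}}$ \emph{in probability} (its final sentence reads ``with probability going to 1''), whereas your Borel--Cantelli argument, with failure probabilities eventually bounded by $\exp(-n^{1/2}/2)$ and hence summable, delivers the almost-sure divergence that the statement actually claims. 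In that respect your proof is not merely valid; it is a better match for the theorem as stated.
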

Recall that practically, this theorem suggests that if we don't sample enough the matrix $M$ (i.e $p$ is too small), a subsampling approximation to its eigenproperties is not likely to work. Let us now prove it.
\begin{proof}
Our strategy is to show that the largest diagonal entry of $C\trsp C/n$ goes to infinity. To do so, we will rely on results in random graph theory. Let us examine more closely this diagonal. Using the definition of $C$, we see that, if $T=C\trsp C$, and $d_i$ is the number of times $\sqrt{(1-p)/p}$ appears in the $i$-th column of $C$,
$$
T(i,i)=\frac{np}{1-p}+d_i \left(\frac{1-p}{p}-\frac{p}{1-p}\right)\;.
$$
Now $\{d_i\}$ is the degree sequence of an Erd\"os-Renyi random graph. According to \cite{BollobasRandomGraphs}, Theorem 3.1, if $k$ is such that $n \binom{n-1}{p} p^k (1-p)^{n-1-k}\tendsto \infty$, then, if $X_k$ is the number of vertices with degree greater than $k$,
$$
\lim_{n\tendsto \infty} P(X_k \geq t)=1\;,
$$
for any $t$. So if we can exhibit such a $k$, then $\max d_i \geq k$ with probability going to 1. We now note that for small $p$,
$$
\left(\frac{1-p}{p}-\frac{p}{1-p}\right)\geq \frac{1}{2p}\;.
$$
Hence, if our $k$ is also such that $k/pn\tendsto \infty$, we will indeed have
$$
\max_i \frac{T(i,i)}{n}\tendsto \infty
$$
and the theorem will be proved.\\
We propose to take $k=np(1+v_n)$. According to \cite{BollobasRandomGraphs}, Theorem 1.5, if $h=k-np$, and $q=1-p$,
\begin{equation}\label{eq:KeyEqThmExplosionOpNormC}
\binom{n}{p} p^k (1-p)^{n-k}\geq \frac{1}{\sqrt{2\pi p q n}} \exp\left(-\frac{h^2}{2pqn}-\frac{h^3}{2q^2 n^2}-\frac{h^4}{3p^3n^3}-\frac{h}{pn}-\beta\right)\;,
\end{equation}
where $\beta=1/(12 k)+1/(12(n-k))$. In our case, $h=np v_n$. Let us show that all the terms in the exponential are negligible compared to $\log n$ as $n\tendsto \infty$:
\begin{itemize}
\item $\beta\tendsto 0$ because $k\tendsto \infty$ and $npv_n=\lo{(\log n)^{2-\delta}}$, given that $v_n=\lo{\log n}$. Hence $n-k\tendsto \infty$. 
\item $h/(pn)=v_n=\lo{\log n}$ by assumption.
\item $h^4/(pn)^3=np v_n^4=\lo{u_n (\log n)^{1-\delta}(\log n)^{\delta}/u_n}=\lo{\log n}$, since $v_n=\lo{(u_n^{-1}(\log n)^{\delta})^{1/4}}$.
\item $h^3/n^2=npv_n^3 p^2=\lo{np v_n^4 p^2}=\lo{p^2 \log n}$, since $v_n^3=\lo{v_n^4}$ ($v_n\tendsto \infty$ by assumption).
\item $h^2/np=np v_n^2=\lo{np v_n^4}=\lo{\log n}$.
\end{itemize}
In light of these estimates, we have as $n\tendsto \infty$,
$$
\sqrt{n}\exp\left(-\frac{h^2}{2pqn}-\frac{h^3}{2q^2 n^2}-\frac{h^4}{3p^3n^3}-\frac{h}{pn}-\beta\right)\tendsto \infty\;.
$$
Therefore, with this choice of $k$,
$$
n \binom{n-1}{p} p^k (1-p)^{n-1-k}\tendsto \infty\;.
$$
We can finally conclude that
$$
\max_i T(i,i)/n \geq \frac{k}{2np} \text{ with probability going to 1}\;.
$$
But because $v_n\tendsto \infty$, we have $k/(2np)\tendsto \infty$ and the theorem is proved.
\end{proof}
We have the following corollary to which we appealed in Subsection \ref{ss:sparse-approx}.
\begin{corollary}
When $p\sim (\log n)^{1-\delta}/n$ for some fixed $\delta \in (0,1)$,
$$
\|C/\sqrt{n}\|_2 \tendsto \infty \text{ with probability one. }
$$
\end{corollary}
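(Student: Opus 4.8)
The plan is to recognize the corollary as the special case $u_n\tendsto 1$ of Theorem~\ref{thm:LoseControlOfOpNormCForSmallp}, so that the entire task reduces to verifying that theorem's hypotheses and, crucially, to exhibiting an admissible auxiliary sequence $v_n$. No new probabilistic content is needed: all the work has already been done in Theorem~\ref{thm:LoseControlOfOpNormCForSmallp} via the Erd\"os--Renyi degree-sequence estimate.

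First I would write the assumption $p\sim (\log n)^{1-\delta}/n$ in the form $p=(\log n)^{1-\delta}u_n/n$ with $u_n\tendsto 1$. In particular $u_n$ is eventually bounded above by any fixed constant $\kappa>1$ and is bounded away from zero, so $u_n^{-1}$ stays bounded and
$[u_n^{-1}(\log n)^{\delta}]^{1/4}\sim (\log n)^{\delta/4}\tendsto \infty$, since $\delta>0$. This places us squarely in the regime covered by the theorem, provided an appropriate $v_n$ exists.

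Next I would supply the sequence $v_n$. The theorem requires $v_n>0$, $v_n\tendsto \infty$, and $v_n=\lo{\min(\log n,(\log n)^{\delta/4})}$. Because $\delta\in(0,1)$ forces $\delta/4<1$, the binding constraint for large $n$ is the scale $(\log n)^{\delta/4}$, so it suffices to pick any positive sequence tending to infinity strictly more slowly than this. The explicit choice $v_n=(\log n)^{\delta/8}$ works: it diverges, and since $\delta/8<\delta/4<1$ we have $v_n=\lo{(\log n)^{\delta/4}}$ and a fortiori $v_n=\lo{\log n}$. With this $v_n$, every hypothesis of Theorem~\ref{thm:LoseControlOfOpNormCForSmallp} is satisfied, and its conclusion immediately yields $\|C/\sqrt{n}\|_2\tendsto \infty$ with probability one.

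The proof is thus bookkeeping rather than substance. The only point demanding any care is that the two smallness requirements on $v_n$ must hold \emph{simultaneously}; but since $(\log n)^{\delta/4}$ is the smaller of the two relevant scales for every $\delta\in(0,1)$, a single sub-polynomial-in-$\log n$ choice such as $(\log n)^{\delta/8}$ dispatches both at once, and no genuine obstacle arises.
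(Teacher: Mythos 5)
Your proposal is correct and follows essentially the same route as the paper: write $p=(\log n)^{1-\delta}u_n/n$ with $u_n\tendsto 1$ (so the boundedness hypothesis on $u_n$ holds and $[u_n^{-1}(\log n)^{\delta}]^{1/4}\sim(\log n)^{\delta/4}$), then feed Theorem~\ref{thm:LoseControlOfOpNormCForSmallp} an explicit slowly diverging sequence $v_n$ given by a power of $\log n$ with exponent strictly below $\delta/4$. The only difference is cosmetic --- you take $v_n=(\log n)^{\delta/8}$ where the paper takes $v_n=(\log n)^{\delta/5}$ --- and both choices satisfy the required $o(\log n)$ and $o((\log n)^{\delta/4})$ conditions simultaneously.
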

The previous corollary follows immediately from Theorem \ref{thm:LoseControlOfOpNormCForSmallp}, by noticing that  $u_n$ is lower bounded under our assumptions and by taking $v_n=(\log n)^{\delta/5}$. 

\subsection{Variance computations}\label{ss:AppendixVarianceComputations}
We provide some details here to complement the explanations we gave in the proof of Theorem \ref{thm:VarianceREu} in Subsection \ref{ss:variance}.
\paragraph{On $\Expect[E^2]$} Let us explain why this matrix is diagonal and compute the coefficients on the diagonal. Recall that $E=\sqrt{(1-p)/p}M\circ C$, where $C$ is a random matrix whose above-diagonal elements are independent, have mean 0 and variance 1. $E$ is naturally symmetric and we call $E_i$ its $i$-th column. Naturally, $E^2(i,j)=E_i\trsp E_j$. Suppose first that $i\neq j$. The elements of $E_i$ and $E_j$ are independent, except for $E_{ij}$ and $E_{ji}$, which are equal. In particular, $E_{ki}$ and $E_{kj}$ are independent for all $1\leq k\leq n$. Recall also that $\Expect[C]=0$, so $\Expect[E]=0$. Combining all these elements, we conclude that, if $i\neq j$,
$$
\Expect[E_i\trsp E_j]=\sum_{k=1}^n \Expect[E_{ki} E_{kj}]=\sum_{k=1}^n \Expect[E_{ki}]\Expect[E_{kj}]=0\;.
$$
Therefore $\Expect[E^2]$ is diagonal. Let us now turn our attention to computing the elements of the diagonal. This is simple since
$$
\Expect[E_i\trsp E_i]=\frac{1-p}{p}\sum_{k=1}^n M_{ki}^2 \Expect[E_{ki}^2]=\frac{1-p}{p}\sum_{k=1}^n M_{ki}^2=\frac{1-p}{p}\norm{M_i}_2^2\;.
$$
We note that this is the result we announced in the proof of Theorem \ref{thm:VarianceREu} in Subsection \ref{ss:variance}.

\paragraph{On $\var(u\trsp E u)$} 
Rewriting this quantity as a sum of independent quantities greatly simplifies the computation. If we pursue this route, we have
$$
u\trsp E u=\sum_{i,j} u(i) u(j) E_{ij}=2 \sum_{i>j} u(i) u(j) E_{ij}+\sum_{i} u(i)^2 E_{ii}\;.
$$
Because the previous expression is a sum of independent random variables, we immediately conclude that
\begin{align*}
\frac{p}{1-p} \var(u\trsp E u)&= 4 \sum_{i>j} u(i)^2 u(j)^2 M^2_{ij}+\sum_{i}u(i)^4 M^2_{ii}\\
&=2(2\sum_{i>j} u(i)^2 u(j)^2 M^2_{ij}+\sum_{i}u(i)^4 M^2_{ii})-\sum_{i}u(i)^4 M^2_{ii}\;.
\end{align*}
Calling $w=u\circ u$ and ${\cal M}=M\circ M$, we immediately recognize in the last expression the quantity
$$
2(w\trsp {\cal M} w)-\sum_k w(k)^2 {\cal M}_{kk}\;,
$$
as announced in the proof of Theorem \ref{thm:VarianceREu}.

\bibliographystyle{alpha}
{\small \bibliography{research,MainPerso}}

\end{document}